\newcommand{\abs}[1]{\left|#1\right|}
\newcommand{\bgset}[1]{\big\{#1\big\}}
\newcommand{\dint}{\ds{\int}}
\newcommand{\ds}[1]{\displaystyle #1}
\newcommand{\M}{{\mathcal M}}
\newcommand{\N}{\mathbb N}
\newcommand{\norm}[2][]{\left\|#2\right\|_{#1}}
\renewcommand{\o}{\text{o}}
\newcommand{\PS}[1]{$(\text{PS})_{#1}$}
\newcommand{\R}{\mathbb R}
\newcommand{\restr}[2]{\left.#1\right|_{#2}}
\newcommand{\set}[1]{\left\{#1\right\}}
\numberwithin{equation}{section}
\newtheorem{theorem}{Theorem}[section]
  \theoremstyle{plain}
  \newtheorem{lemma}[theorem]{Lemma}
  \theoremstyle{plain}
  \newtheorem{proposition}[theorem]{Proposition}
  \theoremstyle{plain}
  \newtheorem{corollary}[theorem]{Corollary}
  \theoremstyle{remark}
\DeclareMathOperator{\supp}{supp}
\newenvironment{enumroman}{\begin{enumerate}

}{\end{enumerate}}
\title[Ground states for anisotropic nonlocal equations]{Ground states for scalar field equations \\ with anisotropic nonlocal nonlinearities}
\author[A.\ Iannizzotto]{Antonio Iannizzotto}
\author[K.\ Perera]{Kanishka Perera}
\author[M.\ Squassina]{Marco Squassina}
\address{Dipartimento di Informatica
\newline\indent
Universit\`a degli Studi di Verona
\newline\indent
C\'a Vignal II
\newline\indent
Strada Le Grazie I-37134 Verona, Italy}
\email{antonio.iannizzotto@univr.it}
\email{marco.squassina@univr.it}
\address{Department of Mathematical Sciences
\newline\indent
Florida Institute of Technology
\newline\indent
150 W University Blvd, Melbourne, FL 32901, USA}
\email{kperera@fit.edu}
\thanks{The third author was partially  supported by 2009 MIUR project:
   ``Variational and Topological Methods in the Study of Nonlinear Phenomena''.}%e grazie a DOMINIQUE
\subjclass[2010]{35J20, 46B50, 74G65}
\keywords{Scalar field equation, anisotropic nonlocal nonlinearity, variable exponent, loss of compactness, existence of ground state}
\begin{document}

\begin{abstract}
We consider a class of scalar field equations with anisotropic nonlocal nonlinearities. 
We obtain a suitable extension of the well-known compactness lemma of Benci and Cerami to 
this variable exponent setting, and use it to prove that the Palais-Smale condition holds at all 
level below a certain threshold. We deduce the existence of a ground state when the variable 
exponent slowly approaches the limit at infinity from below.
\end{abstract}

\maketitle

%\listoftodos

\section{Introduction and main results}

\noindent
In the present paper we seek {\it ground states}, namely least energy solutions, for the following nonlocal anisotropic scalar field equation:
\begin{equation} \label{ele}
- \Delta u + V(x)\, u = \lambda\, \frac{|u|^{p(x) - 2}\, u}{\int_{\R^N} |u(x)|^{p(x)}\, dx}, \qquad u\in H^1(\R^N).
\end{equation}
Here, $N\geq 2$, $V\in L^\infty(\R^N)$ is a weight function satisfying
\begin{equation} \label{asyv}
\lim_{|x| \to \infty} V(x) = V^\infty > 0,
\end{equation}
and the variable exponent $p \in C(\R^N)$ satisfies
\begin{equation} \label{ppm}
2 < p^- := \inf_{x \in \R^N} p(x) \le \sup_{x \in \R^N} p(x) =: p^+ < 2^\ast,
\end{equation}
\begin{equation} \label{asyp}
\lim_{|x| \to \infty} p(x) = p^\infty
\end{equation}
($2^\ast=2N/(N-2)$ if $N\ge 3$, $2^\ast=\infty$ if $N=2$). Equation \eqref{ele} is the Euler-Lagrange equation for the constrained $C^1$ functional $\restr{J}{\M}$, where we denote for all $u\in H^1(\R^N)$
$$
J(u):=\int_{\R^N} \left(|\nabla u|^2 + V(x)\, u^2\right) dx, \quad I(u):=\inf \Big\{\gamma > 0 : \int_{\R^N} \abs{\frac{u(x)}{\gamma}}^{p(x)} \frac{dx}{p(x)} \le 1\Big\},
$$
and
\[\M := \set{u \in H^1(\R^N) : I(u) = 1},\]
namely $u\in H^1(\R^N)$ solves \eqref{ele} if and only if $u$ is a critical point of $\restr{J}{\M}$ with $J(u)=\lambda$ (see Section 2 below for details). In particular, the ground states of \eqref{ele} are the minimizers of $\restr{J}{\M}$ and the corresponding energy level is
\[
\lambda_1:=\inf_{u\in\M}J(u).
\]
In other terms, \eqref{ele} has a ground state if and only if $\lambda_1$ is attained.
\vskip2pt
\noindent
The constant exponent case $p(x) \equiv p^\infty \in (2,2^\ast)$ of equation \eqref{ele} has been studied extensively for more than three decades (see Bahri and Lions \cite{BL} for a detailed account). Ground states are quite well understood in this case. Set for all $u\in H^1(\R^N)$
\[
I^\infty(u):=\left[\int_{\R^N}|u(x)|^{p^\infty}\frac{dx}{p^\infty}\right]^\frac{1}{p^\infty}, \qquad
\M^\infty := \set{u \in H^1(\R^N) : I^\infty(u) = 1}.
\]
The infimum
\[\widetilde{\lambda}_1 := \inf_{u \in \M^\infty} J(u)\]
is not attained in general. The asymptotic functional
\[J^\infty(u) := \int_{\R^N} \left(|\nabla u|^2 + V^\infty u^2\right) dx, \quad u \in H^1(\R^N)\]
attains its infimum
\begin{equation}
\label{lambda1inft}
\lambda_1^\infty := \inf_{u \in \M^\infty} J^\infty(u)>0
\end{equation}
at a positive radial function $w_1^\infty$ (see Berestycki and Lions \cite{BL1} and Byeon {\it et al.} \cite{BJM}).
Moreover, such minimizer is unique up to translations (see Kwong \cite{K}).
By \eqref{asyv} and the translation invariance of $J^\infty$, one can easily
see that $\widetilde{\lambda}_1 \le \lambda_1^\infty$, and $\widetilde{\lambda}_1$ is attained if this inequality is strict (see Lions \cite{L,L1}).
\vskip4pt
\noindent
In this paper we give sufficient conditions on the weight
$V$ and the exponent $p$ for the existence of a ground state of \eqref{ele}. Precisely, we shall prove the following result:

\begin{theorem}\label{main}
Assume that $V \in L^\infty(\R^N)$ satisfies \eqref{asyv} and $p \in C(\R^N)$ satisfies \eqref{ppm} and \eqref{asyp}. Then $- \infty < \lambda_1 \le \lambda_1^\infty$. If
\begin{equation} \label{ineq}
\lambda_1 < \left(\frac{p^-}{p^\infty}\right)^{2/p^\infty} \lambda_1^\infty,
\end{equation}
then $\lambda_1$ is attained at a positive minimizer $w_1\in C^1(\R^N)$.
\end{theorem}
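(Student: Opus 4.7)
The plan is to prove $-\infty < \lambda_1 \le \lambda_1^\infty$ directly, and to deduce attainment under \eqref{ineq} from the variable-exponent compactness lemma of Section~2 applied to a Palais--Smale sequence at level $\lambda_1$.

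First, since $V \in L^\infty(\R^N)$ and any $u \in \M$ satisfies $I(u) = 1$, a routine argument combining the Sobolev embedding $H^1(\R^N) \hookrightarrow L^{p(\cdot)}(\R^N)$ with Gagliardo--Nirenberg-type interpolation shows that $J$ is bounded below on $\M$, so $\lambda_1 > -\infty$. For $\lambda_1 \le \lambda_1^\infty$, I would translate the radial ground state $w_1^\infty$ of $J^\infty$ on $\M^\infty$ to infinity by setting $v_n(x) := w_1^\infty(x - y_n)$ with $|y_n| \to \infty$. Changing variables and applying dominated convergence through \eqref{asyv}, \eqref{asyp}, and the exponential decay of $w_1^\infty$, one finds
\[
\int_{\R^N} |v_n|^{p(x)} \frac{dx}{p(x)} \longrightarrow \int_{\R^N} |w_1^\infty|^{p^\infty} \frac{dx}{p^\infty} = 1, \qquad J(v_n) \longrightarrow J^\infty(w_1^\infty) = \lambda_1^\infty,
\]
so that rescaling to $t_n v_n \in \M$ with $t_n \to 1$ gives $\lambda_1 \le \lim J(t_n v_n) = \lambda_1^\infty$.

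For attainment, I would take a minimizing sequence on $\M$ and use Ekeland's principle on the $C^1$ constraint $\M$ to extract a Palais--Smale sequence $\{u_n\}$ for $\restr{J}{\M}$ at level $\lambda_1$, bounded in $H^1(\R^N)$. The heart of the proof is then to invoke the Benci--Cerami-type compactness lemma of Section~2: under \eqref{ineq} every such PS sequence is relatively compact. Schematically, that lemma furnishes a profile decomposition
\[
u_n = u + \sum_j w_j(\cdot - y_{j,n}) + o(1) \quad \text{in } H^1(\R^N), \qquad |y_{j,n}| \to \infty,
\]
with each nontrivial $w_j$ a weak solution of the constant-exponent asymptotic problem, together with the mass and energy identities
\[
1 = \int_{\R^N} |u|^{p(x)} \frac{dx}{p(x)} + \sum_j \int_{\R^N} |w_j|^{p^\infty} \frac{dx}{p^\infty}, \qquad \lambda_1 = J(u) + \sum_j J^\infty(w_j).
\]
Coupling the variational lower bounds $J(u) \ge I(u)^2\, \lambda_1$ and $J^\infty(w_j) \ge I^\infty(w_j)^2\, \lambda_1^\infty$ with the estimates relating $I$ and $I^\infty$ to their respective modulars (via $p^-$, $p^+$, and $p^\infty$) then shows that any nontrivial bubble would force a lower bound on $\lambda_1/\lambda_1^\infty$ incompatible with \eqref{ineq}. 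Hence all $w_j$ vanish, $u_n \to w_1 := u$ strongly in $H^1(\R^N)$, and $w_1 \in \M$ realizes $\lambda_1$.

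Finally, sign and regularity are standard: since $J(|u|) = J(u)$ and $I(|u|) = I(u)$, I may assume $w_1 \ge 0$; \eqref{ele} with $\lambda = \lambda_1$ is a subcritical semilinear equation with a nonlocal coefficient, so elliptic bootstrapping yields $w_1 \in C^1(\R^N)$, and the strong maximum principle upgrades this to $w_1 > 0$. The main obstacle is the compactness step: the precise form of the threshold $(p^-/p^\infty)^{2/p^\infty}$ in \eqref{ineq} emerges from the worst-case interplay, in the energy accounting of the profile decomposition, between the variable-exponent bulk modular (controlled via $p^-$ and $p^+$) and the constant-exponent modular of a bubble at infinity (governed by $p^\infty$).
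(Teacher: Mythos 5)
Your overall route — translating $w_1^\infty$ to infinity to prove $\lambda_1 \le \lambda_1^\infty$, extracting a Palais--Smale sequence at level $\lambda_1$ via Ekeland, running the Benci--Cerami-type profile decomposition, and ruling out bubbles under \eqref{ineq} — is exactly the paper's strategy, and the sign/regularity remarks at the end are also what the paper does. The identification of the mechanism behind the threshold $(p^-/p^\infty)^{2/p^\infty}$ is essentially right, though the paper states the mass identity for $\rho(u) = \int |u|^{p(x)}\,dx$ rather than for the modular $\int |u|^{p(x)}\,dx/p(x)$; it is from $\rho_0 \ge p^-$ together with $\|w^{(n)}\|_{p^\infty}^2 = (\rho^\infty(w^{(n)})/p^\infty)^{2/p^\infty}$ that the exact constant drops out.

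There is, however, a genuine gap in the very first step. You claim $\lambda_1 > -\infty$ (and, later, boundedness in $H^1(\R^N)$ of your Palais--Smale sequence) via ``Sobolev embedding plus Gagliardo--Nirenberg interpolation.'' This cannot work as stated: to bound $\|u\|_2$ by $\|\nabla u\|_2^\theta\|u\|_q^{1-\theta}$ one needs $\tfrac12 = \theta\,\tfrac{1}{2^\ast} + (1-\theta)\tfrac1q$ with $\theta \in [0,1]$, and when $2 < q < 2^\ast$ this forces $\theta < 0$. In other words, neither $\|\nabla u\|_2$ nor the constraint $I(u)=1$ (which lives in $L^{p(\cdot)}$ with $p^->2$) controls the global $L^2$ mass — a function can put unbounded $L^2$ mass at low frequencies while keeping $\|\nabla u\|_2$ and $\|u\|_{p(\cdot)}$ fixed. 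So if $V$ is negative somewhere, the term $\int V u^2$ is not yet controlled. The hypothesis \eqref{asyv} is essential here and must be used: since $V(x) \to V^\infty > 0$, one has $V(x) \ge V^\infty/2$ outside some ball $B_R$, so the potentially negative contribution is confined to $B_R$, where the embedding $L^{p(\cdot)}(B_R) \hookrightarrow L^2(B_R)$ (valid because $|B_R|<\infty$ and $p^->2$) gives the needed control. This is precisely the paper's Lemma \ref{bdd}, which proves that sublevel sets of $\restr{J}{\M}$ are bounded in $H^1(\R^N)$ — and that lemma is also what makes your Palais--Smale sequence bounded, another point you asserted without justification. You should cite \eqref{asyv} explicitly in this step rather than rely on interpolation.
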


\noindent
In particular, there exists a positive ground state if $p^\infty = p^-$ and $\lambda_1 < \lambda_1^\infty$ (hence our result is consistent with the constant exponent case). Hypothesis \eqref{ineq} is {\it global}, but it can be assured by making convenient {\it local} assumptions on $p$, for instance when $p(x)$ slowly approaches $p^\infty$ from below as $|x|\to\infty$:

\begin{theorem} \label{local}
Assume that $V \in L^\infty(\R^N)$ satisfies \eqref{asyv} and $p \in C(\R^N)$ satisfies \eqref{ppm} and \eqref{asyp}. Let $\psi\in C^1(\R^+,\R^+_0)$ be a mapping such that $\psi(r) \to \infty$ as $r \to \infty$ and the function $e^{- \psi(|\cdot|)} \in H^1(\R^N)$, and let $R > 0$. Then there exists $a>0$ such that, if
\begin{equation} \label{plocal}
p(x) \le p^\infty - \frac{a}{\psi(|x|)}, \quad |x|\ge R,
\end{equation}
then $\lambda_1$ is attained.
\end{theorem}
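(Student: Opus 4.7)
By Theorem~\ref{main}, it is enough to exhibit a single $u\in\M$ with $J(u)<T\lambda_1^\infty$, where $T:=(p^-/p^\infty)^{2/p^\infty}$; note that $T<1$, since \eqref{plocal} forces $p^-<p^\infty$. The plan is to use as test function a suitable translate of the asymptotic ground state $w:=w_1^\infty$, renormalized so as to lie in $\M$: setting $u_y(x):=w(x-y)$ and $\alpha_y:=I(u_y)$, the homogeneity of the Luxemburg-type functional $I$ yields $\tilde u_y:=u_y/\alpha_y\in\M$ and $J(\tilde u_y)=J(u_y)/\alpha_y^2$. The numerator is routine: by \eqref{asyv} and dominated convergence, $J(u_y)=\lambda_1^\infty+o(1)$ as $|y|\to\infty$, so the whole difficulty is to quantify $\alpha_y-1>0$ using \eqref{plocal}.

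The number $\alpha_y$ is the unique solution of $h(\alpha_y)=1$, where
\[
h(\gamma):=\int_{\R^N}\frac{|w(z)|^{p(z+y)}}{\gamma^{p(z+y)}\,p(z+y)}\,dz.
\]
Since $h$ is convex and strictly decreasing, $\alpha_y-1\ge(h(1)-1)/|h'(1)|$, and $|h'(1)|=\int|w|^{p(\cdot+y)}\,dz$ is bounded uniformly in $y$ (via $|w|^{p(\cdot+y)}\le|w|^{p^-}+|w|^{p^+}\in L^1$). To lower-bound $h(1)-1$, I use the pointwise convexity of $q\mapsto w(z)^q/q$ on $(0,\infty)$ whenever $w(z)\in(0,1)$ (direct differentiation) together with the tangent inequality at $q=p^\infty$: for $w(z)\in(0,1)$,
\[
\frac{|w(z)|^{p(z+y)}}{p(z+y)}-\frac{|w(z)|^{p^\infty}}{p^\infty}\ge\frac{w(z)^{p^\infty}}{p^\infty}\left(\frac{1}{p^\infty}+|\ln w(z)|\right)(p^\infty-p(z+y)).
\]
Combined with \eqref{plocal} and the exponential decay of $w_1^\infty$ (which controls both the compact region $\{w\ge1\}$ and the far region $\{|z+y|<R\}$), integration yields
\[
h(1)-1\ge\frac{a\,c_1}{\psi(|y|)}\bigl(1+o(1)\bigr)\quad\text{as }|y|\to\infty,
\]
where $c_1:=\int_{\{w\le1\}}w^{p^\infty}(1/p^\infty+|\ln w|)/p^\infty\,dz>0$ depends only on $w_1^\infty$ and $p^\infty$.

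Putting these pieces together, the inequality $J(\tilde u_y)<T\lambda_1^\infty$ rewrites as
\[
(1-T)\lambda_1^\infty+\bigl(J(u_y)-\lambda_1^\infty\bigr)<T\lambda_1^\infty(\alpha_y^2-1),
\]
whose right-hand side is at least $2Tc_1\lambda_1^\infty a/\psi(|y|)\,(1+o(1))$. I then pick $y_0\in\R^N$ (depending only on $V,p,\psi,R$) with $|y_0|$ large enough that $|J(u_{y_0})-\lambda_1^\infty|$ together with the remainder terms is dominated by $(1-T)\lambda_1^\infty/4$, and finally define $a=a(\psi,R,V,p)>0$ sufficiently large that $2Tc_1a/\psi(|y_0|)>1-T$; this forces $\lambda_1\le J(\tilde u_{y_0})<T\lambda_1^\infty$, and Theorem~\ref{main} produces the minimizer. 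The main obstacle is precisely the passage from the pointwise tangent-line inequality to the integral lower bound on $h(1)-1$: one must carefully treat (i) the compact region $\{w\ge1\}$, where the tangent-line inequality has the wrong sign but the contribution is $O(\sup_K|p(\cdot+y)-p^\infty|)=o(1)$; (ii) the small region $\{|z+y|<R\}$, where \eqref{plocal} is not in force but $w(z)$ is exponentially small in $|y|$; and (iii) the comparison $\psi(|z+y|)\sim\psi(|y|)$ on the effective support of $w_1^\infty$, which is presumably where both the $C^1$-regularity of $\psi$ and the hypothesis $e^{-\psi(|\cdot|)}\in H^1(\R^N)$ come into play.
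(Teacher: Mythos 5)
Your route is a genuinely different approach from the paper's, and the obstacle you flag at the end is a real gap, not just a technicality you postponed. The paper tests with the explicit function $e^{-\psi(|\cdot|)}$ rather than with a translate of $w_1^\infty$, which makes \eqref{plocal} interact with the test function pointwise: for $|x|\ge R$ one has $\psi(|x|)\,p(x)\le p^\infty\psi(|x|)-a$, hence $\rho(e^{-\psi(|\cdot|)})\ge e^a\int_{B_R(0)^c}e^{-p^\infty\psi(|x|)}\,dx$, and by \eqref{modnorm} the Luxemburg norm $\norm[p(\cdot)]{e^{-\psi(|\cdot|)}}$ grows like a positive power of $e^a$, while the numerator $J(e^{-\psi(|\cdot|)})$ is a fixed finite number depending only on $V$ and $\psi$. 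So the Rayleigh quotient in \eqref{4.4} tends to $0$ as $a\to\infty$ and drops below $(p^-/p^\infty)^{2/p^\infty}\lambda_1^\infty$ once $a$ is large enough. No translation, no tangent-line inequality, and in particular no need to compare $\psi$ at nearby points.

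The gap in your version is precisely step (iii). To pass from the pointwise tangent-line bound, whose useful factor is $p^\infty-p(z+y)\ge a/\psi(|z+y|)$, to $h(1)-1\ge ac_1/\psi(|y|)\,(1+o(1))$, you need $\psi(|z+y|)\lesssim\psi(|y|)$ uniformly over the effective support of $w_1^\infty$. This is false under the stated hypotheses: $\psi$ is allowed to grow arbitrarily fast (for example $\psi(r)=e^{r^2}$ is $C^1$, tends to $\infty$, and gives $e^{-\psi(|\cdot|)}\in H^1(\R^N)$), and it is not assumed monotone, so $\psi(|z+y|)/\psi(|y|)$ can be unbounded even for $|z|\le 1$; the constant $1/\psi(|y|)$ cannot be pulled out of the integral. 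The assumptions $\psi\in C^1$ and $e^{-\psi(|\cdot|)}\in H^1(\R^N)$ control $\psi$ only from below, not its relative oscillation, so they do not rescue the step. One could replace $\psi(|y|)$ by $\sup_{|z|\le M}\psi(|z+y|)$ for a fixed $M$, but then the threshold value of $a$ depends on the chosen translate $y_0$ through this supremum, and one must still check compatibility with the constraint $p^->2$ from \eqref{ppm}; the paper's choice of test function removes all of this bookkeeping at once.
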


\noindent
For example, given $R > 0$, we can find $a>0$ such that, if
\[p(x) \le p^\infty - ae^{- |x|}, \quad |x|\ge R,\]
then $\lambda_1$ is attained. We note that the only assumption on $V$ in Theorem \ref{local} is \eqref{asyv}.
\vskip2pt
\noindent
Finally, we address the problem of {\it symmetry} of minimizers. Apparently, the best we can achieve under the assumption of radial symmetry of the data $V$ and $p$ is axial symmetry of all ground states:

\begin{corollary}\label{symm}
Assume that $N\geq 3$, $V \in L^\infty(\R^N)$ satisfies \eqref{asyv} and $p \in C(\R^N)$ satisfies \eqref{ppm} and \eqref{asyp}, and both are radially symmetric in $\R^N$. Moreover, assume that \eqref{ineq} holds. Then, for every minimizer $w$ there exist a line $L$ though $0$ and a function $\widetilde{w}:L\times\R^+\to\R$ such that
\[w(x)=\widetilde{w}(P_L(x),|x-P_L(x)|), \quad x\in\R^N,\]
where $P_L:\R^N\to L$ denotes the projection onto $L$.
\end{corollary}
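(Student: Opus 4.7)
The plan is to apply polarization with respect to half-spaces through the origin and use the resulting rigidity to extract an axis of symmetry.

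First I would reduce to non-negative minimizers. Since $|\nabla |w|\,| = |\nabla w|$ a.e.\ we have $J(|w|) = J(w)$, and clearly $I(|w|) = I(w) = 1$, so $|w|$ is again a minimizer. By Theorem~\ref{main} and the strong maximum principle applied to the Euler--Lagrange equation \eqref{ele}, one may in fact assume $w > 0$ on $\R^N$.

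Next, for every closed half-space $H \subset \R^N$ with $0 \in \partial H$, let $\sigma_H$ denote the reflection across $\partial H$ and let $w^H$ be the polarization of $w$ with respect to $H$, i.e.
\[
w^H(x) := \begin{cases} \max\bgset{w(x),\, w(\sigma_H(x))} & \text{if } x \in H, \\ \min\bgset{w(x),\, w(\sigma_H(x))} & \text{if } x \notin H. \end{cases}
\]
The classical polarization inequality yields $w^H \in H^1(\R^N)$ with
\[
\int_{\R^N} |\nabla w^H|^2\, dx \;\le\; \int_{\R^N} |\nabla w|^2\, dx,
\]
equality holding if and only if $w^H = w$ or $w^H = w\circ\sigma_H$ a.e. The radial symmetry of $V$ and $p$ gives $V = V\circ\sigma_H$ and $p = p\circ\sigma_H$, and since polarization only permutes the values of $w$ within the pairs $\set{x,\sigma_H(x)}$, we obtain
\[
\int_{\R^N} V(x)\,(w^H)^2\, dx = \int_{\R^N} V(x)\, w^2\, dx
\qquad\text{and}\qquad I(w^H) = I(w) = 1.
\]
Hence $w^H \in \M$ and $J(w^H) \le J(w) = \lambda_1$, so $w^H$ is again a minimizer. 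This forces equality in the Dirichlet polarization inequality, whence for every half-space $H$ through $0$ either $w^H = w$ or $w^H = w\circ\sigma_H$.

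Finally I would invoke the well-known polarization characterization of foliated Schwarz symmetry: a positive function in $H^1(\R^N)$ whose polarization with respect to every half-space through the origin coincides with itself or with its $\sigma_H$-reflection must be axially symmetric with respect to some line $L$ through $0$, i.e.\ of the form $\widetilde{w}(P_L(x),|x - P_L(x)|)$. Applied to our $w$, this produces the line $L$ and the function $\widetilde{w}$ in the statement.

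The main technical point is controlling the equality case of the polarization inequality and then passing from it to axial symmetry; both ingredients are by now standard, but they rely on the positivity and regularity of $w$ ensured by Theorem~\ref{main}. The hypothesis $N \ge 3$ enters through the foliated Schwarz symmetrization framework, which is formulated for at least three dimensions (in $N = 2$ the conclusion degenerates into mere reflection symmetry across a line).
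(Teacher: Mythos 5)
There is a genuine gap in your argument, and it is at the step you call ``the classical polarization inequality.'' For $u\in H^{1}(\R^N)$ and a half-space $H$ through the origin, polarization is in fact an \emph{isometry} on $H^1$: one always has
\[
\int_{\R^N}|\nabla u^H|^2\,dx = \int_{\R^N}|\nabla u|^2\,dx,
\]
with no inequality and no rigidity (this is a standard fact; see Brock--Solynin or Van Schaftingen's surveys on polarization). Intuitively, on the ``no-swap'' set $u^H=u$, on the ``swap'' set $u^H = u\circ\sigma_H$, and the swap set is $\sigma_H$-invariant, so the gradient is merely permuted. Since $V$ and $p$ are radial, the same permutation argument you use for the $V$-term and for $I$ shows that \emph{every} term of $J$ and the constraint $I$ are exactly preserved by every such polarization. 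Consequently $w^H$ is always a minimizer, for every $H$, and the conclusion $w^H=w$ or $w^H=w\circ\sigma_H$ simply does not follow from energy comparison. Without that rigidity, the polarization characterization of foliated Schwarz symmetry cannot be invoked, and the argument stops.

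The paper takes a different route precisely to get around this. It appeals to Mari\c{s}'s abstract symmetry theorem \cite[Theorem 1]{M}, which uses reflections $u_{\Pi^\pm}$ (not polarizations) across hyperplanes through $0$, and derives the rigidity not from an energy inequality but from the regularity of minimizers and a unique-continuation argument applied to the Euler--Lagrange equation; this is why the proof checks hypothesis \textbf{A2} of \cite{M} via the $C^1$ regularity established in Theorem \ref{main}, and why $N\ge 3$ is needed (Mari\c{s}'s scheme iterates over codimension-one hyperplanes). If you want to repair your approach in the spirit you set out, you would need to combine the observation that $w^H$ is a minimizer (hence a positive $C^1$ solution of the Euler--Lagrange equation) with a strong maximum principle/unique continuation step to obtain $w^H=w$ or $w^H=w\circ\sigma_H$; at that point you would essentially be reproving Mari\c{s}'s theorem in this special case.
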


\noindent
As in the constant exponent case, the main difficulty is the lack of compactness
inherent in this problem, which originates from the invariance of $\R^N$ under the action
of the noncompact group of translations, and manifests itself in the noncompactness of the
embedding of $H^1(\R^N)$ into the variable exponent Lebesgue space $L^{p(\cdot)}(\R^N)$.
This in turn implies that the manifold $\M$ is not weakly closed in $H^1(\R^N)$ and that
$\restr{J}{\M}$ does not satisfy the Palais-Smale compactness condition (shortly \PS{c}) at all energy
levels $c\in\R$. We will use the concentration compactness principle of Lions (see \cite{L,L1,L2}),
expressed as a suitable profile decomposition for \PS{} sequences of $\restr{J}{\M}$, to overcome
these difficulties. Developing this argument, we will also prove an extension to the variable
exponent case of the compactness lemma of Benci and Cerami \cite[Lemma 3.1]{BC}.
\vskip4pt
\noindent
The paper has the following structure: in Section 2 we introduce the mathematical background
and establish some technical lemmas; in Section 3 we prove that \PS{c} holds for all $c$ below
a threshold level; and in Section 4 we deliver the proofs of our main results.

\section{Preliminaries}

\noindent
We consider the space $H^1(\R^N)$, endowed with the norm defined by
\[\norm{u}^2:=\int_{\R^N}\Big(\abs{\nabla u(x)}^2+V^\infty\abs{u(x)}^2\Big) dx, \quad u\in H^1(\R^N),\]
which is equivalent to the standard norm. Clearly we have $J\in C^1(H^1(\R^N))$ with
\[\langle J'(u),v\rangle = 2 \int_{\R^N} \big(\nabla u \cdot \nabla v + V(x)\, uv\big)\, dx, \quad u, v \in H^1(\R^N).\]
We recall some basic features from the theory of variable exponent Lebesgue space, referring the reader to the book of Diening {\it et al.} \cite{DHHR} for a detailed account on this subject. Let $p\in C(\R^N)$ satisfy \eqref{ppm} and \eqref{asyp}. The space $L^{p(\cdot)}(\R^N)$ contains all the measurable functions $u:\R^N\to\R$ such that
\[\rho(u):=\int_{\R^N}|u(x)|^{p(x)}dx<\infty.\]
This is a reflexive Banach space under the following modified Luxemburg norm, introduced by Franzina and Lindqvist \cite{FL}:
\[\norm[p(\cdot)]{u}:=\inf \Big\{\gamma > 0 : \int_{\R^N} \abs{\frac{u(x)}{\gamma}}^{p(x)} \frac{dx}{p(x)} \le 1\Big\}, \quad u\in L^{p(\cdot)}(\R^N).\]
The following relation will be widely used in our study:
\begin{equation} \label{modnorm}
p^- \min \set{\norm[p(\cdot)]{u}^{p^+},\norm[p(\cdot)]{u}^{p^-}} \le \rho(u) \le p^+ \max \set{\norm[p(\cdot)]{u}^{p^+},\norm[p(\cdot)]{u}^{p^-}}.
\end{equation}
It can be proved noting that, for all $u\in L^{p(\cdot)}$,
\begin{equation}\label{one}
\int_{\R^N} \abs{\frac{u(x)}{\norm[p(\cdot)]{u}}}^{p(x)} \frac{dx}{p(x)}=1.
\end{equation}
Analogously, for all $q>1$ we endow the constant exponent Lebesgue space $L^q(\R^N)$ with the norm
\[\norm[q]{u}^q:=\int_{\R^N}\abs{u(x)}^q\frac{dx}{q}, \quad u\in L^q(\R^N).\]
By \eqref{ppm} and \cite[Theorem 3.3.11]{DHHR}, the embedding $L^{p^-}(\R^N)\cap L^{p^-}(\R^N)\hookrightarrow L^{p(\cdot)}(\R^N)$ is continuous. So, by the Sobolev embedding theorem, also $H^1(\R^N)\hookrightarrow L^{p(\cdot)}(\R^N)$ is continuous. As a consequence, the functional $I:H^1(\R^N)\to\R$ is well defined and continuous. Moreover, reasoning as in \cite[Lemma A.1]{FL}, we see that $I\in C^1(H^1(\R^N)\setminus\{0\})$ with
\[\langle I'(u),v\rangle= \frac{\dint_{\R^N} \abs{\frac{u(x)}{I(u)}}^{p(x) - 2} \frac{u(x)}{I(u)}\: v(x)\, dx}{\dint_{\R^N} \abs{\frac{u(x)}{I(u)}}^{p(x)} dx}, \quad u \in H^1(\R^N) \setminus \set{0},\, v \in H^1(\R^N).\]
In particular, as $1$ is a regular value of $I$, $\M$ turns out to be a $C^1$ Hilbert manifold. By the Lagrange multiplier rule, $u\in\M$ is a critical point of $\restr{J}{\M}$ if and only if there exists $\mu\in\R$ such that
\[J'(u)=\mu I'(u) \quad\mbox{in $H^{-1}(\R^N)$,}\]
that is (recalling that $I(u)=1$), if and only if $u$ is a (weak) solution of \eqref{ele} with $\lambda=\mu/2$. Moreover, testing \eqref{ele} with $u$ yields $J(u)=\lambda$.
\vskip4pt
\noindent
We set
\[\sigma(u):=\max\{\norm[p(\cdot)]{u}^{p^--1},\norm[p(\cdot)]{u}^{p^+-1}\}, \quad u\in L^{p(\cdot)}(\R^N),\]
and we prove the following properties that will be used later:

\begin{lemma}\label{esti}
For all $u,v\in L^{p(\cdot)}(\R^N)$ we have
\begin{enumroman}
\item \label{esti1} $\displaystyle\int_{\R^N}\abs{u(x)}^{p(x)-1}\abs{v(x)} dx\le p^+\sigma(u)\norm[p(\cdot)]{v}$;
\item \label{esti2} $\abs{\rho(u)-\rho(v)}\le(p^+)^2(\sigma(u)+\sigma(v))\norm[p(\cdot)]{u-v}$.
\end{enumroman}
\end{lemma}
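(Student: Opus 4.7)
My plan for part (i) is to normalize both factors and extract a pointwise bound for the $x$-dependent power $\norm[p(\cdot)]{u}^{p(x)-1}$. Setting $\gamma:=\norm[p(\cdot)]{u}$ and $\delta:=\norm[p(\cdot)]{v}$ (the case $u=0$ or $v=0$ being trivial), I rewrite
\[\int_{\R^N}|u|^{p(x)-1}|v|\,dx=\int_{\R^N}\left(\frac{|u|}{\gamma}\right)^{p(x)-1}\frac{|v|}{\delta}\cdot\gamma^{p(x)-1}\delta\,dx,\]
and observe that treating the cases $\gamma\ge 1$ and $\gamma<1$ separately yields $\gamma^{p(x)-1}\le\max\{\gamma^{p^+-1},\gamma^{p^--1}\}=\sigma(u)$ pointwise in $x$. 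This lets me pull $\sigma(u)\,\delta$ outside the integral, so it remains only to bound the normalized integral by $p^+$. I obtain this by applying Young's inequality pointwise with the conjugate pair $p(x)/(p(x)-1)$ and $p(x)$, then integrating: the first summand contributes at most $(p^+-1)\cdot 1$ after estimating $(p(x)-1)/p(x)\le(p^+-1)/p(x)$ and using \eqref{one}, while the second contributes $1$ by a direct application of \eqref{one}.

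For part (ii), I reduce to (i) via the elementary scalar inequality $|a^t-b^t|\le t(a^{t-1}+b^{t-1})|a-b|$ for $a,b\ge 0$ and $t\ge 1$, which follows from the fundamental theorem of calculus together with the trivial bound $\max\{a,b\}^{t-1}\le a^{t-1}+b^{t-1}$. Applied pointwise with $t=p(x)$, $a=|u(x)|$, $b=|v(x)|$, and combined with $\bigl||u|-|v|\bigr|\le|u-v|$, it gives
\[|\rho(u)-\rho(v)|\le p^+\int_{\R^N}\bigl(|u|^{p(x)-1}+|v|^{p(x)-1}\bigr)|u-v|\,dx.\]
Two applications of (i), once with $v$ replaced by $u-v$ and once with the roles of $u$ and $v$ swapped, then deliver exactly the $(p^+)^2(\sigma(u)+\sigma(v))\norm[p(\cdot)]{u-v}$ bound.

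The main technical obstacle is the one isolated at the start of (i): because $p$ is a variable exponent, the factor $\gamma^{p(x)-1}$ depends on $x$ and cannot be pulled out of the integral without loss. The definition of $\sigma$ as the maximum of the two extreme powers $\norm[p(\cdot)]{u}^{p^--1}$ and $\norm[p(\cdot)]{u}^{p^+-1}$ is tailored precisely to absorb this $x$-dependence uniformly, regardless of whether $\norm[p(\cdot)]{u}$ is larger or smaller than $1$; once this observation is in place, both estimates reduce to standard Young/mean-value manipulations.
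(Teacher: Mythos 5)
Your proof is correct and follows essentially the same route as the paper's: part (i) normalizes $u$ and $v$, absorbs the $x$-dependent factor $\norm[p(\cdot)]{u}^{p(x)-1}$ into $\sigma(u)$, and applies Young's inequality with conjugate exponents $p(x)$ and $p(x)/(p(x)-1)$ together with the normalization identity \eqref{one}; part (ii) uses the same scalar mean-value inequality $|a^t-b^t|\le t(a^{t-1}+b^{t-1})|a-b|$ and two applications of (i). The only cosmetic difference is that you make explicit the two cases $\gamma\ge 1$ and $\gamma<1$ behind the pointwise bound $\gamma^{p(x)-1}\le\sigma(u)$, which the paper states without comment.
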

\begin{proof}
We prove \ref{esti1}. Taking $a = \big(|u(x)|/\norm[p(\cdot)]{u}\big)^{p(x) - 1},\, b = |v(x)|/\norm[p(\cdot)]{v},\, q = p(x)$ in the well-known Young's inequality
\begin{equation}\label{young}
ab \le \Big(1 - \frac{1}{q}\Big) a^{q/(q-1)} + \frac{1}{q}\, b^q \quad a, b \ge 0,\, q> 1,
\end{equation}
and integrating over $\R^N$ gives (note that $\norm[p(\cdot)]{u}^{p(x)-1}\leq\sigma(u)$ in $\R^N$)
\begin{align*}
\frac{1}{\sigma(u)\norm[p(\cdot)]{v}}\int_{\R^N}\abs{u(x)}^{p(x)-1}\abs{v(x)} dx &\le
\int_{\R^N} \abs{\frac{|u(x)|}{\norm[p(\cdot)]{u}}}^{p(x)-1} \frac{|v(x)|}{\norm[p(\cdot)]{v}} dx \\
&\le \int_{\R^N} \Big(1-\frac{1}{p(x)}\Big)\abs{\frac{u(x)}{\norm[p(\cdot)]{u}}}^{p(x)} dx+\int_{\R^N} \abs{\frac{v(x)}{\norm[p(\cdot)]{v}}}^{p(x)} \frac{dx}{p(x)} \\
&\le p^+,
\end{align*}
the last inequality following from \eqref{one}.
\vskip4pt
\noindent
Now we prove \ref{esti2}. Taking $a = |u(x)|,\, b = |v(x)|,\, q = p(x)$ in the elementary inequality
\[|a^q - b^q| \le q(a^{q-1} + b^{q-1})(a - b), \quad a, b \ge 0,\, p > 1,\]
and integrating over $\R^N$ gives
\begin{align*}
\abs{\rho(u)-\rho(v)} &\le \int_{\R^N} \abs{|u(x)|^{p(\cdot)} - |v(x)|^{p(\cdot)}} dx \\
&\le p^+\Big(\int_{\R^N} |u(x)|^{p(x)-1}\big(|u(x)|-|v(x)|\big) dx+\int_{\R^N} |v(x)|^{p(x)-1}\big(|u(x)| - |v(x)|\big) dx\Big) \\
&\le (p^+)^2 (\sigma(u)+\sigma(v))\norm[p(\cdot)]{u-v},
\end{align*}
the last inequality following from \ref{esti1}.
\end{proof}

\noindent
From Lemma \ref{esti} \ref{esti2} it follows that, if $(u_k)$, $(v_k)$ are bounded sequences in $L^{p(\cdot)}(\R^N)$, then there exists $C>0$ such that
\[\abs{\rho(u_k)-\rho(v_k)}\le C\norm[p(\cdot)]{u_k-v_k}, \quad k\in\N.\]
The autonomous case $V(x)\equiv V^\infty$, $p(x)\equiv p^\infty$ represents a limit case for \eqref{ele}. We set
\[\rho^\infty(u) = \int_{\R^N} |u(x)|^{p^\infty}dx, \quad \sigma^\infty(u) = \norm[p^\infty]{u}^{p^\infty - 1}, \quad u \in L^{p^\infty}(\R^N),\]
so Lemma \ref{esti} gives for all $u, v \in L^{p^\infty}(\R^N)$
\begin{equation} \label{sinf}
\int_{\R^N} |u(x)|^{p^\infty - 1}|v(x)|dx \le p^\infty\sigma^\infty(u) \norm[p^\infty]{v},
\end{equation}
\begin{equation} \label{rinf}
|\rho^\infty(u) - \rho^\infty(v)| \le (p^\infty)^2(\sigma^\infty(u) + \sigma^\infty(v)) \norm[p^\infty]{u - v}.
\end{equation}
Morevoer, we have the following asymptotic laws:

\begin{lemma} \label{trans}
If $u \in H^1(\R^N)$, $(y_k)$ is a sequence in $\R^N$, $|y_k| \to \infty$, and $u_k = u(\cdot - y_k)$, then
\begin{enumroman}
\item \label{trans1} $\rho(u_k) \to \rho^\infty(u)$,
\item \label{trans2} $I(u_k) \to I^\infty(u)$,
\item \label{trans3} $J(u_k) \to J^\infty(u)$.
\end{enumroman}
\end{lemma}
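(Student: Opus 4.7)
The plan is to reduce each of the three statements to Lebesgue's dominated convergence theorem via the translation change of variables $x \mapsto x + y_k$, which rewrites every integral involving $u_k$ as an integral of the fixed $u$ against a translated weight. The pointwise limits $V(x + y_k) \to V^\infty$ and $p(x + y_k) \to p^\infty$ hold for each fixed $x \in \R^N$ by \eqref{asyv} and \eqref{asyp}, since $|x + y_k| \to \infty$.

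Part \ref{trans3} is immediate: the translation yields $J(u_k) = \int_{\R^N} (|\nabla u|^2 + V(x + y_k)\, u^2)\, dx$, and since $|V| \le \norm[L^\infty]{V}$ the integrand is dominated by $|\nabla u|^2 + \norm[L^\infty]{V}\, u^2 \in L^1(\R^N)$, so dominated convergence gives $J(u_k) \to J^\infty(u)$. Part \ref{trans1} is similar: I would write $\rho(u_k) = \int_{\R^N} |u(x)|^{p(x + y_k)}\, dx$ and dominate the integrand by $|u|^{p^-} + |u|^{p^+}$, using the elementary bound $s^t \le s^{p^-} + s^{p^+}$ for $s \ge 0$ and $t \in [p^-, p^+]$ (split into $s \le 1$ and $s > 1$). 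By \eqref{ppm} the Sobolev embedding $H^1(\R^N) \hookrightarrow L^r(\R^N)$ covers $r = p^\pm$, so the dominating function is integrable.

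Part \ref{trans2} is the delicate one, since $I(u_k)$ is defined implicitly as a Luxemburg infimum on a variable-exponent space whose exponent itself moves with $k$. Assuming $u \not\equiv 0$ (the null case being trivial), I would introduce the auxiliary functions
\[
f_k(\gamma) := \int_{\R^N} \abs{\frac{u(x)}{\gamma}}^{p(x + y_k)} \frac{dx}{p(x + y_k)}, \qquad f_\infty(\gamma) := \int_{\R^N} \abs{\frac{u(x)}{\gamma}}^{p^\infty} \frac{dx}{p^\infty},
\]
which are continuous and strictly decreasing on $(0, \infty)$ with limits $\infty$ at $0^+$ and $0$ at $\infty$. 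By the change of variables and \eqref{one}, $f_k(I(u_k)) = 1$ and $f_\infty(I^\infty(u)) = 1$. Applying the dominated-convergence argument of part \ref{trans1} to $u/\gamma$ at a fixed $\gamma > 0$ yields $f_k(\gamma) \to f_\infty(\gamma)$. Given $\eps > 0$ small, strict monotonicity of $f_\infty$ forces $f_\infty(I^\infty(u) - \eps) > 1 > f_\infty(I^\infty(u) + \eps)$; pointwise convergence transfers these strict inequalities to $f_k$ for large $k$, and the monotonicity of $f_k$ traps $I(u_k) \in (I^\infty(u) - \eps, I^\infty(u) + \eps)$, giving $I(u_k) \to I^\infty(u)$.

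The main obstacle is thus \ref{trans2}: because the exponent itself drifts with $k$, no variable-exponent Hölder inequality (such as Lemma \ref{esti}) directly compares $I(u_k)$ with $I^\infty(u)$. The monotonicity-in-$\gamma$ argument above sidesteps this by transferring convergence from the modular level, where dominated convergence applies painlessly, up to the Luxemburg level.
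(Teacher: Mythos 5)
Your proof is correct and follows essentially the same route as the paper: parts \ref{trans1} and \ref{trans3} are the identical change-of-variables plus dominated-convergence argument, and for part \ref{trans2} both proofs first establish pointwise convergence of the modular $\gamma \mapsto \int |u(x)/\gamma|^{p(x+y_k)}\,dx/p(x+y_k)$ at each fixed $\gamma > 0$ and then transfer it to the Luxemburg level using strict monotonicity in $\gamma$. The paper phrases this last step as a contradiction on subsequences while you phrase it as an $\eps$-trapping argument, but the two are the same idea.
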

\begin{proof}
We prove \ref{trans1}. For all $k\in\N$, the change of variable $z=x-y_k$ gives
\[\rho(u_k) = \int_{\R^N} |u_k(x)|^{p(x)}dx =\int_{\R^N} |u(z)|^{p(z + y_k)}dz.\]
Since $p(\cdot + y_k) \to p^\infty$ by \eqref{asyp} and $|u(z)|^{p(z + y_k)} \le |u(z)|^{p^-} + |u(z)|^{p^+}$, the last integral converges to $\rho^\infty(u)$ by the dominated convergence theorem.
\vskip4pt
\noindent
We prove \ref{trans2}. As in the proof of \ref{trans1}, for all $\gamma>0$
\begin{equation}\label{trans4}
\int_{\R^N} \abs{\frac{u_k(x)}{\gamma}}^{p(x)} \frac{dx}{p(x)}\to \int_{\R^N} \abs{\frac{u(z)}{\gamma}}^{p^\infty} \frac{dz}{p^\infty} = \Big(\frac{\norm[p^\infty]{u}}{\gamma}\Big)^{p^\infty}.
\end{equation}
If $\norm[p(\cdot)]{u_k} \not\to \norm[p^\infty]{u}$, then there exists $\varepsilon_0 > 0$ such that, on a renumbered subsequence, either $\norm[p(\cdot)]{u_k} \le \norm[p^\infty]{u} - \varepsilon_0$ or $\norm[p(\cdot)]{u_k} \ge \norm[p^\infty]{u} + \varepsilon_0$. In the former case, $\norm[p^\infty]{u} \ge \varepsilon_0$ and, taking $\varepsilon_0$ smaller if necessary, we may assume that this inequality is strict. Then
\[\int_{\R^N} \Big(\frac{|u_k(x)|}{\norm[p^\infty]{u} - \varepsilon_0}\Big)^{p(x)} \frac{dx}{p(x)} \le \int_{\R^N} \Big(\frac{|u_k(x)|}{\norm[p(\cdot)]{u_k}}\Big)^{p(x)} \frac{dx}{p(x)} = 1.\]
Passing to the limit as $k\to\infty$, \eqref{trans4} implies
\[\Big(\frac{\norm[p^\infty]{u}}{\norm[p^\infty]{u} - \varepsilon_0}\Big)^{p^\infty}\leq 1,\]
a contradiction. The latter case leads to a similar contradiction.
\vskip4pt
\noindent
Finally we prove \ref{trans3}. We have for all $k\in\N$
\[J(u_k) = \int_{\R^N} \left(|\nabla u_k|^2 + V(x)\, u_k^2\right) dx = \int_{\R^N} \left(|\nabla u|^2 + V(z + y_k)\, u^2\right) dz,\]
where $z = x - y_k$. Since $V(\cdot + y_k) \to V^\infty$ by \eqref{asyv} and $V \in L^\infty(\R^N)$, the last integral converges to $J^\infty(u)$ by the dominated convergence theorem.
\end{proof}

\section{A compactness result}

\noindent
In this section we prove that $\restr{J}{\M}$ satisfies \PS{c} whenever $c\in\R$ lies below a certain threshold level. The main technical tool that we will use for handling the convergence matters is the following profile decomposition of Solimini \cite{S} for bounded sequences in $H^1(\R^N)$.

\begin{proposition} \label{sol}
Let $(u_k)$ be a bounded sequence in $H^1(\R^N)$, and assume that there is a constant $\delta > 0$ such that, if $u_k(\cdot + y_k) \rightharpoonup w \ne 0$ on a renumbered subsequence for some sequence $(y_k)$ in $\R^N$ with $|y_k| \to \infty$, then $\norm{w} \ge \delta$. Then there exist $m \in \N$, $w^{(1)},\ldots w^{(n)} \in H^1(\R^N)$, and sequences $(y^{(1)}_k),\ldots (y^{(n)}_k)$ in $\R^N$, $y^{(1)}_k = 0$ for all $k \in \N$, $w^{(n)} \ne 0$ for all $2\le n \le m$, such that, on a renumbered subsequence,
\begin{enumroman}
\item\label{sol1} $u_k(\cdot + y^{(n)}_k) \rightharpoonup w^{(n)}$;
\item\label{sol2} $\big|y^{(n)}_k - y^{(l)}_k\big| \to \infty$ for all $n \ne l$;
\item\label{sol3} $\displaystyle\sum_{n=1}^m\, \|w^{(n)}\|^2 \le \liminf_k \norm{u_k}^2$;
\item\label{sol4} $\displaystyle u_k - \sum_{n=1}^m\, w^{(n)}(\cdot - y^{(n)}_k) \to 0$ in $L^q(\R^N)$ for all $q \in (2,2^\ast)$.
\end{enumroman}
\end{proposition}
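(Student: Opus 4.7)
The plan is to construct the decomposition by an iterative extraction of profiles, the uniform lower bound $\delta$ forcing termination after finitely many stages. First I would set $y^{(1)}_k:=0$ and, passing to a subsequence, let $w^{(1)}$ be the weak $H^1$-limit of $(u_k)$, so that $r^{(1)}_k:=u_k-w^{(1)}\rightharpoonup 0$. Assuming inductively that profiles $w^{(1)},\ldots,w^{(n)}$ and translations $y^{(1)}_k,\ldots,y^{(n)}_k$ have already been produced, I would set
\[
r^{(n)}_k:=u_k-\sum_{j=1}^n w^{(j)}(\cdot-y^{(j)}_k)
\]
and test whether $r^{(n)}_k\to 0$ in $L^q(\R^N)$ for every $q\in(2,2^\ast)$. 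If so, the procedure stops with $m=n$; otherwise, Lions' vanishing alternative produces a sequence $(y^{(n+1)}_k)$ in $\R^N$ such that $r^{(n)}_k(\cdot+y^{(n+1)}_k)\rightharpoonup w^{(n+1)}\neq 0$ along a subsequence. Since $r^{(n)}_k\rightharpoonup 0$ in $H^1(\R^N)$, one necessarily has $|y^{(n+1)}_k|\to\infty$, and the same argument applied to each of the previously extracted profiles forces $|y^{(n+1)}_k-y^{(j)}_k|\to\infty$ for every $j\le n$, which is (ii). The hypothesis of the proposition then gives $\norm{w^{(n+1)}}\ge\delta$.

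The quantitative heart of the argument is the identity
\[
\norm{u_k}^2=\sum_{j=1}^n\norm{w^{(j)}}^2+\norm{r^{(n)}_k}^2+\o(1),\qquad k\to\infty,
\]
which I would establish by iterating the Brezis--Lieb-type decomposition $\norm{u_k}^2=\norm{w}^2+\norm{u_k-w}^2+\o(1)$, valid whenever $u_k\rightharpoonup w$ in $H^1(\R^N)$, combined with the asymptotic orthogonality coming from (ii): the cross terms in $\norm{\sum_j w^{(j)}(\cdot-y^{(j)}_k)}^2$ vanish in the limit by a change of variables and the weak convergence of the drifting translates to $0$. This gives $\sum_{j=1}^n\norm{w^{(j)}}^2\le\liminf_k\norm{u_k}^2$, and since each of $w^{(2)},\ldots,w^{(n)}$ contributes at least $\delta^2$, the extraction must terminate at some finite index $m$. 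Taking limits yields (iii), while (iv) is precisely the stopping condition.

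The main obstacle I anticipate is the diagonal subsequence bookkeeping: at every stage one must refine the subsequence both to select the weak limit of the translated residual and to make each translation sequence \emph{pure} (either $|y^{(n)}_k|\to\infty$ or $y^{(n)}_k$ bounded), while propagating the asymptotic orthogonality across iterations through translates that drift mutually to infinity. In particular, one has to check that the weak convergence of $r^{(n)}_k(\cdot+y^{(n+1)}_k)$ to $w^{(n+1)}$ is not destroyed by subtracting the previously extracted terms, which amounts to showing that each summand $w^{(j)}(\cdot-y^{(j)}_k+y^{(n+1)}_k)$ converges weakly to zero in $H^1(\R^N)$ as soon as $|y^{(n+1)}_k-y^{(j)}_k|\to\infty$. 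All individual ingredients are classical, but the inductive verification is the technically most delicate part.
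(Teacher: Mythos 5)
The paper does not prove this proposition at all: it is quoted verbatim from Solimini (reference \cite{S}) and used as a black-box tool, so there is no in-paper proof to compare against. What you have written is the standard iterative profile-extraction argument, and it is the right route. Your main loop is correct: extract the weak limit $w^{(1)}$ along a subsequence, set $r^{(n)}_k = u_k - \sum_{j\le n} w^{(j)}(\cdot - y^{(j)}_k)$, and either $r^{(n)}_k\to 0$ in every $L^q$, $q\in(2,2^\ast)$ (stop), or Lions' vanishing lemma supplies $(y^{(n+1)}_k)$ and $w^{(n+1)}\ne 0$ with $r^{(n)}_k(\cdot+y^{(n+1)}_k)\rightharpoonup w^{(n+1)}$. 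You also correctly identify the two points that make the induction work: (a) $w^{(j)}(\cdot-y^{(j)}_k+y^{(n+1)}_k)\rightharpoonup 0$ whenever $|y^{(n+1)}_k-y^{(j)}_k|\to\infty$, so that $u_k(\cdot+y^{(n+1)}_k)$ and $r^{(n)}_k(\cdot+y^{(n+1)}_k)$ share the same weak limit $w^{(n+1)}$, which is precisely what lets you invoke the $\delta$-hypothesis (stated for the $u_k$'s, not for the residuals) to conclude $\|w^{(n+1)}\|\ge\delta$; and (b) the asymptotic Pythagoras identity $\|u_k\|^2=\sum_{j\le n}\|w^{(j)}\|^2+\|r^{(n)}_k\|^2+\o(1)$, whose cross terms vanish because $r^{(n)}_k(\cdot+y^{(j)}_k)\rightharpoonup 0$ and because mutually drifting translates of fixed profiles are asymptotically orthogonal. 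Combined with $\|w^{(j)}\|\ge\delta$ for $j\ge 2$, this caps the number of stages by $1+\delta^{-2}\liminf_k\|u_k\|^2$ and forces termination, yielding (iii) and (iv). The only thing I would insist you make fully explicit in a written-out version is the claim $r^{(n)}_k(\cdot+y^{(j)}_k)\rightharpoonup 0$ for every $j\le n$ (it follows from $u_k(\cdot+y^{(j)}_k)\rightharpoonup w^{(j)}$ together with (a) applied to the other indices), since this is what simultaneously kills the cross terms in the energy expansion and forces $|y^{(n+1)}_k-y^{(j)}_k|\to\infty$; you flag this as the delicate point without spelling it out. Beyond that bookkeeping, the argument is sound and matches the standard proof of the cited result.
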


\noindent
Since $y^{(1)}_k=0$ for all $k\in\N$, by \ref{sol2} we have $|y^{(n)}_k|\to\infty$ for all $2\le n\le m$. Moreover, by \ref{sol4}, \eqref{ppm} and \cite[Theorem 3.3.11]{DHHR}, we also have
\begin{equation}\label{convpx}
u_k - \sum_{n=1}^m\, w^{(n)}(\cdot - y^{(n)}_k) \to 0 \quad\mbox{in $L^{p(\cdot)}(\R^N)$.}
\end{equation}
Next we show that the sublevel sets of $\widetilde{J}$ are bounded. Set
\[\widetilde{J}^a = \bgset{u \in \M : \widetilde{J}(u) \le a}, \quad a \in \R.\]

\begin{lemma} \label{bdd}
For all $a \in \R$, $\widetilde{J}^a$ is bounded.
\end{lemma}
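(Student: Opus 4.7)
The plan is to show directly that every $u\in\M$ with $J(u)\le a$ admits a bound on $\|u\|$ depending only on $a$. The key preliminary remark is that the modular $\rho$ is automatically bounded on $\M$: indeed, \eqref{one} gives $\int_{\R^N}|u(x)|^{p(x)}\,dx/p(x)=1$ for every $u\in\M$, and since $p(x)\ge p^-$, this yields $\rho(u)\le p^+$. I will use this fact to absorb the potentially non-positive part of the potential $V$.

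Using \eqref{asyv}, I would pick $R>0$ large enough that $V(x)\ge V^\infty/2$, equivalently $V^\infty-V(x)\le V^\infty/2$, for every $|x|\ge R$. Then the elementary identity $\|u\|^2-J(u)=\int_{\R^N}(V^\infty-V(x))\,u^2\,dx$ splits as
\[
\|u\|^2-J(u)\le(V^\infty+\|V\|_\infty)\int_{B_R}u^2\,dx+\tfrac{V^\infty}{2}\int_{|x|>R}u^2\,dx,
\]
using that $V^\infty-V(x)\le V^\infty+\|V\|_\infty$ everywhere. Since $V^\infty\int_{\R^N}u^2\,dx\le\|u\|^2$, the last term is at most $\tfrac12\|u\|^2$, so rearranging gives
\[
\tfrac12\,\|u\|^2\le J(u)+C\int_{B_R}u^2\,dx,\qquad C:=V^\infty+\|V\|_\infty.
\]

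It remains to bound $\int_{B_R}u^2\,dx$ uniformly in $u\in\M$. Since $p(x)>2$ pointwise, the elementary inequality $t^2\le 1+t^{p(x)}$ holds for every $t\ge 0$ (if $t\le 1$ trivially, if $t>1$ because $t^2\le t^{p(x)}$), hence
\[
\int_{B_R}u^2\,dx\le |B_R|+\rho(u)\le |B_R|+p^+.
\]
Substituting back produces $\|u\|^2\le 2a+2C(|B_R|+p^+)$, which is the desired bound.

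The only subtlety is that $V$ is not assumed sign-definite, so $J(u)$ alone need not dominate $\|u\|^2$; the difficulty is repaired by isolating the compact region $B_R$ where $V$ may become negative, and replacing the would-be $L^{p(\cdot)}\hookrightarrow L^2_{\mathrm{loc}}$ embedding on $B_R$ with the cheap pointwise bound above. Every remaining step is purely algebraic bookkeeping.
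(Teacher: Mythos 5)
Your proof is correct and follows essentially the same strategy as the paper: split the potential term into the ball $B_R$ and its complement, absorb the far-field contribution into $\tfrac12\|u\|^2$, and bound the local $L^2$ integral using the constraint $I(u)=1$. The only (minor) divergence is in the last step: the paper invokes the continuous embedding $L^{p(\cdot)}(B_R)\hookrightarrow L^2(B_R)$ from \cite[Corollary 3.3.4]{DHHR}, whereas you use the elementary pointwise inequality $t^2\le 1+t^{p(x)}$ together with $\rho(u)\le p^+$, which is slightly more self-contained and makes the final constant explicit.
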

\begin{proof}
By \eqref{asyv}, we can find $R > 0$ such that $|V(x) - V^\infty| <V^\infty/2$ for all $|x|>R$. So, $V^\infty/2 - V \le 0$ outside the ball $B_R(0)$. For all $u \in \widetilde{J}^a$, we have
\begin{align*}
\frac{\norm{u}^2}{2} &\le \int_{\R^N} \Big(|\nabla u|^2 + \frac{V^\infty}{2}u^2\Big) dx = a+\int_{\R^N} \Big(\frac{V^\infty}{2} - V(x)\Big) u^2dx \\
&\le a+\int_{B_R(0)} \Big(\frac{V^\infty}{2} - V(x)\Big) u^2dx \le a+\Big(\frac{V^\infty}{2} + \norm[\infty]{V}\Big) \int_{B_R(0)} u^2dx.
\end{align*}
Since $u \in \M$, and $L^{p(\cdot)}(B_R(0))$ is continuously embedded in $L^2(B_R(0))$ (see \cite[Corollary 3.3.4]{DHHR}), the last integral is bounded. So, $\widetilde{J}^a$ is bounded.
\end{proof}

\noindent
Now, let $(u_k)$ be a \PS{c}-sequence for $\restr{J}{\M}$ for some $c\in\R$, namely $\widetilde{J}(u_k)\to c$ and $\widetilde{J}'(u_k)\to 0$. Then, there exists a sequence $(\mu_k)$ in $\R$ such that $J'(u_k)-\mu_k I'(u_k)\to 0$ in $H^{-1}(\R^N)$. So we have
\begin{equation} \label{conveq}
- \Delta u_k + V(x)u_k = \frac{\mu_k}{2\rho(u_k)}|u_k|^{p(x) - 2} u_k+ \o(1).
\end{equation}
Testing \eqref{conveq} with $u_k$, we easily get $\mu_k/2\to c$. Besides, since $u_k\in\M$, by \eqref{ppm} we have $p^-\le\rho(u_k)\le p^+$, whence, on a renumbered subsequence, $\rho(u_k)\to\rho_0$ for some $\rho_0\in[p^-,p^+]$.
\vskip4pt
\noindent
We prove some technical properties of $(u_k)$:

\begin{lemma} \label{pslim}
Let $(u_k)$ be as above and $w\in H^1(\R^N)$:
\begin{enumroman}
\item \label{pslim1} if $u_k \rightharpoonup w$ on a renumbered subsequence, then
\[-\Delta w + V(x)w = \frac{c}{\rho_0}|w|^{p(x) - 2}w;\]
\item \label{pslim2} if $u_k(\cdot + y_k) \rightharpoonup w$ on a renumbered subsequence for some sequence $(y_k)$ in $\R^N$ with $|y_k| \to \infty$, then
\[\Delta w + V^\infty w = \frac{c}{\rho_0}|w|^{p^\infty - 2}w.\]
\end{enumroman}
\end{lemma}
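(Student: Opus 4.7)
The plan is to pass to the limit in \eqref{conveq} tested against an arbitrary $\varphi \in C_c^\infty(\R^N)$, exploiting the already-established facts $\mu_k/2 \to c$ and $\rho(u_k) \to \rho_0 \in [p^-, p^+]$, which give $\mu_k/(2\rho(u_k)) \to c/\rho_0$. Both statements then reduce to identifying the limits of the linear and nonlinear terms in the (possibly translated) equation.

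For part \ref{pslim1}, I would fix $\varphi \in C_c^\infty(\R^N)$ and set $K := \supp \varphi$. The weak convergence $u_k \rightharpoonup w$ in $H^1(\R^N)$ directly yields
\[
\int_{\R^N} \bigl(\nabla u_k \cdot \nabla \varphi + V(x)\, u_k \varphi\bigr)\, dx \to \int_{\R^N} \bigl(\nabla w \cdot \nabla \varphi + V(x)\, w \varphi\bigr)\, dx,
\]
while the Rellich-Kondrachov theorem gives $u_k \to w$ in $L^q(K)$ for every $q \in [1, 2^\ast)$ and a.e.\ on $K$ along a subsequence. Since $u_k \in \M$, relation \eqref{modnorm} bounds $\sigma(u_k)$, so Lemma \ref{esti}\ref{esti1} applied to $u_k - w$ (localized by $\varphi$), combined with a Vitali argument on $K$, yields
\[
\int_{\R^N} |u_k|^{p(x) - 2} u_k\, \varphi\, dx \to \int_{\R^N} |w|^{p(x) - 2} w\, \varphi\, dx,
\]
and a density argument then extends the resulting identity from $C_c^\infty(\R^N)$ to $H^1(\R^N)$.

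For part \ref{pslim2}, I would set $v_k := u_k(\cdot + y_k)$, so that $v_k \rightharpoonup w$ in $H^1(\R^N)$, and translate \eqref{conveq} by $y_k$ to obtain
\[
-\Delta v_k + V(x + y_k)\, v_k = \frac{\mu_k}{2\rho(u_k)}\, |v_k|^{p(x + y_k) - 2} v_k + \o(1).
\]
For $\varphi \in C_c^\infty(\R^N)$ with $K := \supp \varphi$, the condition $|y_k| \to \infty$ combined with \eqref{asyv}, \eqref{asyp} and the continuity of $V$, $p$ gives $V(\cdot + y_k) \to V^\infty$ and $p(\cdot + y_k) \to p^\infty$ uniformly on $K$; together with strong local convergence of $v_k$, this handles the linear terms by dominated convergence.

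The main obstacle is passing to the limit in the variable-exponent nonlinear term of (ii), since both $v_k$ and the exponent $p(\cdot + y_k)$ depend on $k$. I would decompose
\[
|v_k|^{p(x + y_k) - 2} v_k - |w|^{p^\infty - 2} w = \bigl(|v_k|^{p(x + y_k) - 2} v_k - |w|^{p(x + y_k) - 2} w\bigr) + \bigl(|w|^{p(x + y_k) - 2} w - |w|^{p^\infty - 2} w\bigr).
\]
The second bracket tends to $0$ a.e.\ on $K$ and is dominated by $|w|^{p^- - 1} + |w|^{p^+ - 1} \in L^1(K)$, so its integral against $\varphi$ vanishes by dominated convergence. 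The first bracket is handled via the pointwise inequality
\[
\bigl| |t|^{q - 2} t - |s|^{q - 2} s \bigr| \le (q - 1)\bigl(|t|^{q - 2} + |s|^{q - 2}\bigr) |t - s|, \quad q \ge 2,
\]
applied with $q = p(x + y_k) \in [p^-, p^+]$, so that a Hölder estimate in the spirit of Lemma \ref{esti}\ref{esti1} bounds its integral against $\varphi$ by a constant multiple of $\|v_k - w\|_{L^{p(\cdot)}(K)}$, which vanishes thanks to the strong $L^q(K)$ convergence $v_k \to w$ for $q > p^+$ and the continuous embedding $L^q(K) \hookrightarrow L^{p(\cdot)}(K)$. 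A final density step yields the limit equation in (ii).
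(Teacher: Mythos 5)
Your proposal is correct and follows the same overall strategy as the paper: test \eqref{conveq} (translated by $y_k$ for part \ref{pslim2}) against $\varphi \in C_c^\infty(\R^N)$ with support $K$, use $\mu_k/(2\rho(u_k)) \to c/\rho_0$, pass to the limit in the linear term via weak convergence and uniform convergence of the data on $K$, and handle the nonlinear term via local compactness. The one genuine difference is your treatment of the nonlinear term in part \ref{pslim2}: you split the error into a piece where only $v_k$ varies (controlled by the pointwise Lipschitz-type inequality and a H\"older estimate) and a piece where only the exponent varies (controlled by dominated convergence). The paper instead reuses the generalized dominated convergence argument of part \ref{pslim1} in one stroke: since $v_k(x) \to w(x)$ a.e.\ on $K$ and $p(\cdot + y_k) \to p^\infty$ uniformly on $K$, the integrand $|v_k|^{p(x+y_k)-2}v_k\,\varphi$ converges a.e., and the Young-inequality majorant $|\varphi| + (1-1/p^+)|v_k|^{p^+} + |\varphi|^{p^+}/p^+$ converges in $L^1(K)$ because $v_k \to w$ strongly in $L^{p^+}(K)$; this avoids the decomposition and the auxiliary inequality entirely. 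One small imprecision in your part \ref{pslim1}: Lemma \ref{esti}\ref{esti1} ``applied to $u_k - w$'' does not directly estimate $\int (|u_k|^{p(x)-2}u_k - |w|^{p(x)-2}w)\varphi\,dx$, since the difference of powers is not the power of the difference; what actually closes the step is the Vitali/generalized dominated convergence argument you invoke alongside it, which is precisely what the paper uses.
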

\begin{proof}
We prove \ref{pslim1}. By the density of $C^\infty_0(\R^N)$ in $H^1(\R^N)$, it suffices to show that
\begin{equation} \label{pslim3}
\int_{\R^N} \big(\nabla w \cdot \nabla v + V(x)wv\big)dx = \frac{c}{\rho_0} \int_{\R^N} |w|^{p(x) - 2}wv dx, \quad v \in C^\infty_0(\R^N).
\end{equation}
We have $\supp v\subset\Omega$ for some bounded domain $\Omega\subset\R^N$. Testing \eqref{conveq} with $v$ gives
\[
\int_\Omega\big(\nabla u_k \cdot \nabla v + V(x)u_k v\big)dx=\frac{\mu_k}{2\rho(u_k)}\int_\Omega|u_k|^{p(x)-2}u_k v dx+\o(1).
\]
Since $V \in L^\infty(\R^N)$ and $u_k \rightharpoonup w$, we have
\[\int_\Omega\big(\nabla u_k \cdot \nabla v + V(x)u_k v\big)dx\to\int_{\R^N} \big(\nabla w \cdot \nabla v + V(x)wv\big)dx.\]
Besides, $\mu_k/(2\rho(u_k))\to c/\rho_0$. Finally, by compactness of the embedding $H^1(\Omega) \hookrightarrow L^{p^+}(\Omega)$, on a renumbered subsequence we have $u_k\to w$ in $L^{p^+}(\Omega)$ and $u_k(x)\to w(x)$ a.e. in $\Omega$. By \eqref{ppm}, \eqref{young} we have a.e. in $\Omega$
\[|u_k|^{p(x) - 1}|v| \le \big(1 + |u_k|^{p^+ - 1}\big)|v| \le |v| + \Big(1 - \frac{1}{p^+}\Big) |u_k|^{p^+} + \frac{1}{p^+}|v|^{p^+},\]
hence by the generalized dominated convergence theorem
\[\int_\Omega|u_k|^{p(x)-2}u_k w dx\to\int_{\R^N} |w|^{p(x) - 2}wv dx,\]
which proves \ref{pslim1}.
\vskip4pt
\noindent
We prove \ref{pslim2}. As above, we only need to show that for all $v \in C^\infty_0(\R^N)$
\begin{equation} \label{pslim4}
\int_{\R^N} \big(\nabla w \cdot \nabla v + V^\infty wv\big)dx = \frac{c}{\rho_0} \int_{\R^N} |w|^{p^\infty - 2}wv dx.
\end{equation}
We have $\supp v\subset\Omega$ for some bounded domain $\Omega\subset\R^N$. Testing \eqref{conveq} with $v(\cdot - y_k)$ and making the change of variable $z = x - y_k$ gives for all $k\in\N$
\[\int_{\R^N} \big(\nabla \widetilde{u}_k \cdot \nabla v + V(z + y_k)\, \widetilde{u}_k v\big)dz = \frac{\mu_k}{2 \rho(u_k)} \int_{\R^N} |\widetilde{u}_k|^{p(z + y_k) - 2}\widetilde{u}_k v dz + \o(1),\]
where $\widetilde{u}_k = u_k(\cdot + y_k)$. Since $\widetilde{u}_k \rightharpoonup w$ and $V(\cdot + y_k) \to V^\infty$ uniformly on $\Omega$, we have
\[\int_{\R^N} \big(\nabla \widetilde{u}_k \cdot \nabla v + V(z + y_k)\, \widetilde{u}_k v\big)dz\to\int_{\R^N} \big(\nabla w \cdot \nabla v + V^\infty wv\big)dx.\]
Besides, $\mu_k/(2\rho(u_k))\to c/\rho_0$. Finally, exploiting again \eqref{ppm}, \eqref{young} as in the proof of \ref{pslim1} we have, on a renumbered subsequence,
\[\int_{\R^N} |\widetilde{u}_k|^{p(z + y_k) - 2}\widetilde{u}_k v dz\to\int_{\R^N} |w|^{p^\infty - 2}wv dx,\]
which proves \ref{pslim2}.
\end{proof}

\noindent
The main result of this section is the following extension to the variable exponent case of the compactness lemma of Benci and Cerami \cite[Lemma 3.1]{BC}.

\begin{proposition}\label{pd}
Let $(u_k)$ be a {\em \PS{c}}-sequence for $\widetilde{J}$, $c\in\R$. Then there exist $m \in \N$, $w^{(1)},\ldots w^{(n)} \in H^1(\R^N)$, and sequences $(y^{(1)}_k),\ldots (y^{(n)}_k)$ in $\R^N$, $y^{(1)}_k = 0$ for all $k \in \N$, $w^{(n)} \ne 0$ for all $2\le n \le m$, such that, on a renumbered subsequence, $\rho(u_k)\to\rho_0$ for some $\rho_0 \in [p^-,p^+]$, and
\begin{enumroman}
\item\label{pd1} $u_k(\cdot + y^{(n)}_k) \rightharpoonup w^{(n)}$;
\item\label{pd2} $\big|y^{(n)}_k - y^{(l)}_k\big| \to \infty$ for all $n \ne l$;
\item\label{pd3} $\displaystyle\sum_{n=1}^m\, \|w^{(n)}\|^2 \le \liminf_k \norm{u_k}^2$;
\item\label{pd4} $-\Delta w^{(1)}+V(x)w^{(1)}=c/\rho_0 |w^{(1)}|^{p(x) - 2}w^{(1)}$;
\item\label{pd5} $-\Delta w^{(n)}+V^\infty w^{(n)}=c/\rho_0 |w^{(1)}|^{p^\infty - 2}w^{(n)}$, $2\le n\le m$;
\item\label{pd6} $J(w^{(1)})=c/\rho_0 \rho(w^{(1)})$;
\item\label{pd7} $J^\infty(w^{(n)})=c/\rho_0 \rho^\infty(w^{(n)})$, $2\le n\le m$;
\item\label{pd8} $\displaystyle\rho(w^{(1)})+\sum_{n=2}^m\rho^\infty(w^{(n)})=\rho_0$;
\item\label{pd9} $\displaystyle J(w^{(1)})+\sum_{n=2}^m J^\infty(w^{(n)})=c$;
\item\label{pd10} $\displaystyle u_k - \sum_{n=1}^m\, w^{(n)}(\cdot - y^{(n)}_k) \to 0$ in $H^1(\R^N)$.
\end{enumroman}
\end{proposition}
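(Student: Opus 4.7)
The strategy is to derive the decomposition by applying Solimini's Proposition \ref{sol} to $(u_k)$, identifying the weak limits via Lemma \ref{pslim}, and then upgrading the resulting $L^q$-decomposition to the variable-exponent modular $\rho$ and to strong $H^1$-convergence. First, $(u_k)$ is bounded in $H^1(\R^N)$ by Lemma \ref{bdd}, while $\mu_k/2\to c$ and $\rho(u_k)\to\rho_0\in[p^-,p^+]$ have already been established in the discussion preceding Lemma \ref{pslim}. To invoke Proposition \ref{sol}, I must verify its hypothesis: if $u_k(\cdot+y_k)\rightharpoonup w\ne 0$ along a subsequence with $|y_k|\to\infty$, then $\norm{w}\ge\delta$ for some universal $\delta>0$. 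By Lemma \ref{pslim}\ref{pslim2}, such a $w$ solves $-\Delta w+V^\infty w=(c/\rho_0)|w|^{p^\infty-2}w$; testing with $w$ and applying the Sobolev embedding $H^1(\R^N)\hookrightarrow L^{p^\infty}(\R^N)$ gives $\norm{w}^2\le(|c|/\rho_0)\,C\norm{w}^{p^\infty}$, yielding the required lower bound. Proposition \ref{sol} then produces $m$, profiles $w^{(n)}$ and sequences $(y^{(n)}_k)$ satisfying \ref{pd1}--\ref{pd3}, and by \cite[Theorem 3.3.11]{DHHR} the remainder $r_k:=u_k-\sum_{n=1}^m w^{(n)}(\cdot-y^{(n)}_k)$ satisfies $r_k\to 0$ in $L^{p(\cdot)}(\R^N)$, as recorded in \eqref{convpx}.

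Items \ref{pd4} and \ref{pd5} are immediate from Lemma \ref{pslim}\ref{pslim1}, \ref{pslim2} applied to the weak limits $w^{(1)}$ (with $y^{(1)}_k\equiv 0$) and $w^{(n)}$ for $n\ge 2$ (where $|y^{(n)}_k|\to\infty$). Testing \ref{pd4} with $w^{(1)}$ and \ref{pd5} with $w^{(n)}$ yields \ref{pd6} and \ref{pd7}, respectively.

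The central technical step is \ref{pd8}. By Lemma \ref{esti}\ref{esti2} applied to the bounded sequences $(u_k)$ and $\bigl(\sum_n w^{(n)}(\cdot-y^{(n)}_k)\bigr)$ in $L^{p(\cdot)}(\R^N)$, together with $\norm[p(\cdot)]{r_k}\to 0$, it suffices to prove
\[
\rho\Big(\sum_{n=1}^m w^{(n)}(\cdot-y^{(n)}_k)\Big)\longrightarrow \rho(w^{(1)})+\sum_{n=2}^m\rho^\infty(w^{(n)}).
\]
I would first verify this for profiles $w^{(n)}\in C^\infty_0(\R^N)$: by \ref{pd2} the supports of the translates become pairwise disjoint for $k$ large, so $\rho$ is additive on the sum, and Lemma \ref{trans}\ref{trans1} delivers the individual limits (noting $\rho(w^{(1)})$ is fixed since $y^{(1)}_k\equiv 0$). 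The general case follows by approximating each $w^{(n)}$ in $H^1(\R^N)$, hence in $L^{p(\cdot)}(\R^N)$, by compactly supported functions and reapplying Lemma \ref{esti}\ref{esti2}. Multiplying \ref{pd8} by $c/\rho_0$ and combining with \ref{pd6} and \ref{pd7} then gives \ref{pd9}.

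Finally, for \ref{pd10}, set $\widetilde{u}_k:=u_k-\sum_{n=1}^m w^{(n)}(\cdot-y^{(n)}_k)$. Since $u_k\rightharpoonup w^{(1)}$ and $w^{(n)}(\cdot-y^{(n)}_k)\rightharpoonup 0$ in $H^1(\R^N)$ for $n\ge 2$, we have $\widetilde{u}_k\rightharpoonup 0$, so Rellich's theorem gives $\widetilde{u}_k\to 0$ in $L^2_{\mathrm{loc}}(\R^N)$. Expanding the quadratic form $J$, the mixed terms between $\widetilde{u}_k$ and the profiles vanish in the limit by weak convergence, while the mixed terms among distinct profiles vanish by the separation \ref{pd2}; Lemma \ref{trans}\ref{trans3} together with \ref{pd9} then gives $J\bigl(\sum_n w^{(n)}(\cdot-y^{(n)}_k)\bigr)\to c$, whence $J(\widetilde{u}_k)\to 0$ since $J(u_k)\to c$. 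Repeating the coercivity estimate from the proof of Lemma \ref{bdd} for $\widetilde{u}_k$ yields $\norm{\widetilde{u}_k}^2/2\le J(\widetilde{u}_k)+C\int_{B_R(0)}\widetilde{u}_k^2\,dx$, and both terms on the right tend to $0$. The main obstacle I expect is \ref{pd8}: the variable-exponent modular $\rho$ is neither homogeneous nor directly amenable to the classical Brezis--Lieb lemma, and the reduction to compactly supported profiles with disjoint translated supports is precisely what rescues the additivity.
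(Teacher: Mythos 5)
Your proposal is correct and, for items \ref{pd1}--\ref{pd9}, follows essentially the same path as the paper: invoke Proposition \ref{sol} after establishing the uniform lower bound $\|w\|\ge\delta$ via Lemma \ref{pslim}\ref{pslim2} (you use a generic Sobolev constant where the paper uses $\lambda_1^\infty$, which is equivalent), read off \ref{pd4}--\ref{pd7} from Lemma \ref{pslim}, and obtain \ref{pd8} by approximating the profiles by compactly supported functions whose translates have eventually disjoint supports, controlling the error with Lemma \ref{esti}\ref{esti2} and \eqref{rinf}. Your only imprecision there is that for $n\ge2$ you should say explicitly that the $H^1$-approximation is used in the $L^{p^\infty}$-norm (not just $L^{p(\cdot)}$), since the target quantity is $\rho^\infty(w^{(n)})$; but this is harmless since $H^1\hookrightarrow L^{p^\infty}$.

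Where you genuinely diverge from the paper is \ref{pd10}. The paper subtracts the equations \eqref{conveq}, \ref{pd4}, \ref{pd5}, tests the resulting equation for $v_k=u_k-w_k$ with $v_k$ itself, and reaches $\|v_k\|\to0$ by a contradiction argument that estimates the nonlinear terms via Lemma \ref{esti}\ref{esti1} and \eqref{sinf} and the potential term via \eqref{asyv}. You instead expand the quadratic form $J(u_k)=J(v_k)+J(w_k)+2B(v_k,w_k)$, observe that the cross terms vanish because $v_k(\cdot+y^{(n)}_k)\rightharpoonup0$ for each $n$ (including $n=1$) while the cross terms among distinct profiles vanish by \ref{pd2}, use \ref{pd9} and Lemma \ref{trans}\ref{trans3} to get $J(w_k)\to c$, deduce $J(v_k)\to0$, and close with the coercivity estimate from Lemma \ref{bdd} plus $v_k\to0$ in $L^2(B_R)$ by Rellich. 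This is a valid and arguably cleaner argument, exploiting the fact that $J$ is an exact quadratic form and sidestepping all estimates on the nonlocal nonlinearity in the final step; the trade-off is that it is more rigidly tied to the quadratic structure of $J$, whereas the paper's equation-testing argument would adapt more readily to a non-quadratic leading operator. To make your version fully rigorous you should spell out that the mixed terms involving $n\ge2$ are handled by translating by $y^{(n)}_k$ and noting $v_k(\cdot+y^{(n)}_k)\rightharpoonup0$, since the plain statement ``by weak convergence'' glosses over the translation.
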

\begin{proof}
By Lemma \ref{bdd}, the sequence $(u_k)$ is bounded. Passing to a subsequence, we have $\rho(u_k)\to\rho_0$. We shall apply Proposition \ref{sol}. To this end, set
\[\delta=\Big(\frac{p^- (\lambda_1^\infty)^\frac{p^\infty}{2}}{p^\infty c}\Big)^\frac{1}{p^\infty - 2}>0\]
(in particular, $c>0$). If $u_k(\cdot+y_k)\rightharpoonup w$ in $H^1(\R^N)$, on a renumbered subsequence, for some sequence $(y_k)$ in $\R^N$, $|y_k|\to\infty$ and some $w\neq 0$, then by Lemma \ref{pslim} \ref{pslim2} and the definition of $\lambda_1^\infty$ we have, testing with $w$,
\[\|w\|^2=\frac{p^\infty c}{\rho_0}\norm[p^\infty]{w}^{p^\infty}\le\frac{p^\infty c}{p^-}(\lambda_1^\infty)^{-\frac{p^\infty}{2}}\|w\|^{p^\infty},\]
hence $\|w\|\ge\delta$. Then, by Proposition \ref{sol} there exist $w^{(n)}$, $(y^{(n)}_k)$ ($1\le n\le m$) and a renumbered subsequence $(u_k)$ satisfying \ref{pd1}-\ref{pd3} and
\[u_k - \sum_{n=1}^m\, w^{(n)}(\cdot - y^{(n)}_k) \to 0 \quad\mbox{in $L^q(\R^N)$, $2<q<2^\ast$.}\]
By \eqref{convpx}, the convergence above also holds in $L^{p(\cdot)}(\R^N)$. From \ref{pd}, \ref{pd2} and Lemma \ref{pslim} we deduce \ref{pd4} and \ref{pd5}. Further, testing \ref{pd4} with $w^{(1)}$ and \ref{pd5} with $w^{(n)}$ yields \ref{pd6} and \ref{pd7}, respectively.
\vskip4pt
\noindent
We prove now \ref{pd8}. Set for all $k\in\N$
\[w_k = \sum_{n=1}^m w^{(n)}(\cdot - y^{(n)}_k).\]
Since $\norm[p(\cdot)]{u_k - w_k} \to 0$ by \eqref{convpx} and $\norm[p(\cdot)]{u_k} = 1$, $\norm[p(\cdot)]{w_k} \to 1$. Since $\rho(u_k) \to \rho_0$, then $\rho(w_k) \to \rho_0$ by Lemma \ref{esti} \ref{esti2}. Let $\varepsilon > 0$. Since $C^\infty_0(\R^N)$ is dense in $L^{p(\cdot)}(\R^N)$, there exists $\widetilde{w}^{(1)} \in C^\infty_0(\R^N)$ such that $\norm[p(\cdot)]{w^{(1)} - \widetilde{w}^{(1)}} < \varepsilon$, and since $C^\infty_0(\R^N)$ is dense in $L^{p^\infty}(\R^N)$, for all $2\le n\le m$ there exists $\widetilde{w}^{(n)} \in C^\infty_0(\R^N)$ such that $\norm[p^\infty]{w^{(n)} - \widetilde{w}^{(n)}} < \varepsilon$. Let
\[\widetilde{w}_k = \sum_{n=1}^m\, \widetilde{w}^{(n)}(\cdot - y^{(n)}_k).\]
By \ref{pd2} and Lemma \ref{trans} \ref{trans2} we have
\begin{align*}
\norm[p(\cdot)]{w_k - \widetilde{w}_k} & \le \sum_{n=1}^m\norm[p(\cdot)]{w^{(n)}(\cdot - y^{(n)}_k) - \widetilde{w}^{(n)}(\cdot - y^{(n)}_k)} \\
& \to \norm[p(\cdot)]{w^{(1)} - \widetilde{w}^{(1)}} + \sum_{n=2}^m\norm[p^\infty]{w^{(n)} - \widetilde{w}^{(n)}} \le m\varepsilon.
\end{align*}
So,
\[\limsup_k \norm[p(\cdot)]{w_k - \widetilde{w}_k} \le m\varepsilon.\]
Since $\norm[p(\cdot)]{w_k} \to 1$ and $\rho(w_k) \to \rho_0$, then by Lemma \ref{esti} \ref{esti2} we can find a constant $C>0$ such that
\[\limsup_k |\rho(\widetilde{w}_k) - \rho_0| \le C \varepsilon.\]
On the other hand, for all sufficiently large $k$, the (compact) supports of $\widetilde{w}^{(n)}(\cdot - y^{(n)}_k)$ are pairwise disjoint by \ref{pd2} and hence
\[\rho(\widetilde{w}_k) = \sum_{n=1}^m\, \rho(\widetilde{w}^{(n)}(\cdot - y^{(n)}_k)) \to \rho(\widetilde{w}^{(1)}) + \sum_{n=2}^m \rho^\infty(\widetilde{w}^{(n)})\]
by \ref{pd2} and Lemma \ref{trans} \ref{trans1}. So
\[
\abs{\rho(\widetilde{w}^{(1)}) + \sum_{n=2}^m\, \rho^\infty(\widetilde{w}^{(n)}) - \rho_0} \le C \varepsilon.
\]
By Lemmas \ref{esti} \ref{esti2} and \eqref{rinf} we have
\[\abs{\rho(w^{(1)}) - \rho(\widetilde{w}^{(1)})} \le C \varepsilon, \qquad \abs{\rho^\infty(w^{(n)}) - \rho^\infty(\widetilde{w}^{(n)})} \le C \varepsilon, \quad 2\le n\le m\]
by Lemmas \ref{esti} and \ref{trans}. Since $\varepsilon > 0$ is arbitrary, \ref{pd8} follows.
\vskip4pt
\noindent
Adding \ref{pd6} and \ref{pd7} and substituting \ref{pd8}, we get \ref{pd9}.
\vskip4pt
\noindent
We conclude by proving \ref{pd10}. Set $v_k=u_k-w_k$ and $\widetilde{u}_k=u_k-w^{(1)}$ for all $k\in\N$. Note that both $(\widetilde{u}_k)$ and $(v_k)$ are bounded in $H^1(\R^N)$. By \eqref{conveq}, \ref{pd4} and \ref{pd5} we have
\begin{align*}
- \Delta v_k + V^\infty v_k &= (V^\infty - V(x))\widetilde{u}_k + \frac{\mu_k}{2 \rho(u_k)}|u_k|^{p(x) - 2}u_k \\
&-\frac{c}{\rho_0} \Big(|w^{(1)}|^{p(x) - 2} w^{(1)} + \sum_{n=2}^m|w^{(n)}(x - y^{(n)}_k)|^{p^\infty - 2}w^{(n)}(c - y^{(n)}_k)\Big) + \eta_k,
\end{align*}
for a sequence $(\eta_k)$ in $H^{-1}(\R^N)$ with $\eta_k\to 0$ in $H^{-1}(\R^N)$. Testing with $v_k$ and using Lemma \ref{esti} \ref{esti1} and \eqref{sinf} gives
\[\norm{v_k}^2 \le \int_{\R^N} \abs{(V(x) - V^\infty)\widetilde{u}_k v_k} dx + C \big((\sigma(u_k) + 1) \norm[p(\cdot)]{v_k} + \norm[p^\infty]{v_k}\big)+ \o(\norm{v_k}).\]
Since $\norm{u_k}$ is bounded, so are $\norm{\widetilde{u}_k}$, $\sigma(u_k)$, and $\norm{v_k}$. If $\norm{v_k} \not\to 0$, then there exists $\varepsilon_0 > 0$ such that, on a renumbered subsequence, $\norm{v_k} \ge \varepsilon_0$. By \eqref{asyv}, there exists $R > 0$ such that
\[\int_{B_R(0)^c} \abs{(V(x) - V^\infty)\widetilde{u}_k v_k} dx \le 2\sup_{x \in B_R(0)^c}|V(x) - V^\infty| \norm[2]{\widetilde{u}_k} \norm[2]{v_k} \le \frac{\varepsilon_0^2}{2}.\]
Then, from the equation above, H\"{o}lder inequality, Proposition \ref{sol} \ref{sol4} and \eqref{convpx} we get
\[\frac{\varepsilon_0^2}{2} \le C \big(\norm[L^2(B_R(0))]{v_k} + \norm[p^\infty]{v_k} + \norm[p(\cdot)]{v_k}\big) + \o(1) \to 0,\]
a contradiction. Thus, \ref{pd10} is proved.
\end{proof}

\noindent
Now we prove that $\restr{J}{\M}$ satisfies \PS{c} whenever $c$ lies below a threshold level:

\begin{theorem} \label{ps}
Assume that $V \in L^\infty(\R^N)$ satisfies \eqref{asyv} and $p \in C(\R^N)$ satisfies \eqref{ppm} and \eqref{asyp}. Then $\widetilde{J}$ satisfies {\em \PS{c}} for all
\begin{equation} \label{pst}
c < \left(\frac{p^-}{p^\infty}\right)^{2/p^\infty} \lambda_1^\infty.
\end{equation}
\end{theorem}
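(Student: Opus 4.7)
The plan is to apply the profile decomposition of Proposition \ref{pd} to an arbitrary \PS{c}-sequence $(u_k)$ and use hypothesis \eqref{pst} to force all ``bubbles at infinity'' to be trivial; item \ref{pd10} then yields strong $H^1$-convergence to $w^{(1)}$.

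First I would fix $c$ satisfying \eqref{pst} and a \PS{c}-sequence $(u_k)$. By Lemma \ref{bdd} it is bounded in $H^1(\R^N)$, and Proposition \ref{pd} supplies $m\in\N$, profiles $w^{(1)},\dots,w^{(m)}$, translations $(y^{(n)}_k)$ with $y^{(1)}_k=0$, and $\rho_0\in[p^-,p^+]$ with properties \ref{pd1}--\ref{pd10}. The goal is to show $w^{(n)}=0$ for $2\le n\le m$: combined with \ref{pd10} this gives $u_k\to w^{(1)}$ in $H^1(\R^N)$, while the continuous embedding $H^1\hookrightarrow L^{p(\cdot)}$ combined with $\norm[p(\cdot)]{u_k}=1$ forces $w^{(1)}\in\M$, completing \PS{c}.

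Now suppose, for contradiction, that $w^{(n)}\ne 0$ for some $n\ge 2$. From \ref{pd7} we have $\|w^{(n)}\|^2=(c/\rho_0)\,\rho^\infty(w^{(n)})=(cp^\infty/\rho_0)\,\norm[p^\infty]{w^{(n)}}^{p^\infty}$ (using $\rho^\infty(u)=p^\infty\norm[p^\infty]{u}^{p^\infty}$). Combining with the variational characterization $\lambda_1^\infty\le\|w^{(n)}\|^2/\norm[p^\infty]{w^{(n)}}^2$, derived from the definition of $\lambda_1^\infty$ by normalization, I get
\[
\lambda_1^\infty\,\norm[p^\infty]{w^{(n)}}^2\le\frac{cp^\infty}{\rho_0}\norm[p^\infty]{w^{(n)}}^{p^\infty},
\]
which in particular forces $c>0$ and, after dividing by $\norm[p^\infty]{w^{(n)}}^2\ne 0$ and raising to the $p^\infty/(p^\infty-2)$ power, yields
\[
\rho^\infty(w^{(n)})=p^\infty\norm[p^\infty]{w^{(n)}}^{p^\infty}\ge p^\infty\Big(\frac{\rho_0\,\lambda_1^\infty}{cp^\infty}\Big)^{p^\infty/(p^\infty-2)}.
\]

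Since \ref{pd8} gives $\rho^\infty(w^{(n)})\le\rho_0$, I can substitute this into the previous display and solve for $c$: the exponents collapse to
\[
c\ge\lambda_1^\infty\Big(\frac{\rho_0}{p^\infty}\Big)^{2/p^\infty}\ge\lambda_1^\infty\Big(\frac{p^-}{p^\infty}\Big)^{2/p^\infty},
\]
using $\rho_0\ge p^-$, and this contradicts \eqref{pst}. Hence every $w^{(n)}$ with $n\ge 2$ vanishes, and the argument concludes as described above. I expect the main obstacle to be the routine but delicate bookkeeping in the exponent algebra of the inequality for $\rho^\infty(w^{(n)})$, in particular keeping the $p^\infty$-factors (coming from the normalized norm $\|\cdot\|_{p^\infty}$ versus the modular $\rho^\infty$) straight so that the threshold constant matches $(p^-/p^\infty)^{2/p^\infty}$ exactly; the profile decomposition itself does all the analytic work.
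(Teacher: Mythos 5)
Your proof is correct and follows essentially the same route as the paper: apply the profile decomposition of Proposition \ref{pd}, use item \ref{pd7} together with the variational characterization of $\lambda_1^\infty$ to bound $\rho^\infty(w^{(n)})$ from below for any $n \ge 2$, and then invoke item \ref{pd8} (which forces $\rho^\infty(w^{(n)}) \le \rho_0$) to contradict \eqref{pst}. The paper packages this by introducing the ratios $t_n = \rho^\infty(w^{(n)})/\rho_0$ and deducing $t_n > 1$ for $n \ge 2$ from $\sum_n t_n = 1$, while you work directly with $\rho^\infty(w^{(n)})$; this is only a cosmetic repackaging, and your exponent bookkeeping is correct.
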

\begin{proof}
Let $c$ satisfy condition \eqref{pst} and let $u_k \in \M$ be a \PS{c} sequence for $\widetilde{J}$.
Then $u_k$ admits a renumbered subsequence that satisfies the conclusions of Proposition \ref{pd}.
Let us set
$$
t_1 = \rho(w^{(1)})/\rho_0,\qquad
t_n = \rho^\infty(w^{(n)})/\rho_0,\quad \text{for $n = 2,\dots,m$}.
$$
Then
\begin{equation} \label{4.2}
\sum_{n=1}^m\, t_n = 1
\end{equation}
by \ref{pd8} of Proposition \ref{pd}, so each $t_n \in [0,1]$, and $t_n \ne 0$ for $n \ge 2$. For $n = 2,\dots,m$,
\[
c\, t_n = J^\infty(w^{(n)}) \ge \lambda_1^\infty \norm[p^\infty]{w^{(n)}}^2 = \lambda_1^\infty \left(\frac{\rho_0\, t_n}{p^\infty}\right)^{2/p^\infty} \ge \lambda_1^\infty \left(\frac{p^-\, t_n}{p^\infty}\right)^{2/p^\infty}
\]
by \ref{pd7} of Proposition \ref{pd} and \eqref{lambda1inft}, so
\[
t_n \ge \left[\frac{\lambda_1^\infty}{c} \left(\frac{p^-}{p^\infty}\right)^{2/p^\infty}\right]^{p^\infty/(p^\infty - 2)} > 1
\]
by \eqref{pst}. Then \eqref{4.2} implies $m = 1$ and hence $u_k \to w^{(1)}$ in $H^1(\R^N)$ by $(x)$ of Proposition \ref{pd}.
\end{proof}

\medskip

\section{Proofs of main theorems}

\subsection{Proof of Theorem \ref{main}}
Since the sublevel sets of $\widetilde{J}$ are bounded by Lemma \ref{bdd} and $\widetilde{J}$ is clearly bounded on bounded sets, $\lambda_1 > - \infty$. To see that $\lambda_1 \le \lambda_1^\infty$, let $w_1^\infty$ be the minimizer of $J^\infty$ on $\M^\infty$ mentioned in the introduction, $y_k \in \R^N$, $|y_k| \to \infty$, and $w_k = w_1^\infty(\cdot - y_k)$. Since $w_k/\norm[p(\cdot)]{w_k} \in \M$,
\[
\lambda_1 \le J\Big(\frac{w_k}{\norm[p(\cdot)]{w_k}}\Big) = \frac{J(w_k)}{\norm[p(\cdot)]{w_k}^2} \to \frac{J^\infty(w_1^\infty)}{\norm[p^\infty]{w_1^\infty}^2} = \lambda_1^\infty
\]
by \ref{trans2} and \ref{trans3} of Lemma \ref{trans}, so $\lambda_1 \le \lambda_1^\infty$. Assume now that \eqref{ineq} holds.
Since $\widetilde{J}$ satisfies the Palais-Smale condition at the level $\lambda_1$ by Theorem \ref{ps}, it has a minimizer
$w_1$ by a standard argument. Then $|w_1|$ is a minimizer too and
hence we may assume that $w_1 \ge 0$. Since $w_1 \ne 0$, then $w_1 > 0$ by the strong maximum principle.
Observe that a solution $u\in H^1(\R^N)$ of \eqref{ele} satisfies $-\Delta u=g(x,u)$ and
$$
\Big|\frac{g(x,u)}{u}\Big|\leq C+C|u|^{p(x)-2}\leq C+C|u|^{\frac{4}{N-2}},\quad \text{for some $C=C(V,N,p^+)>0$}.
$$
Then by standard regularity theory, $u$ is of class $C^1(\R^N)$, see e.g.\ \cite[Appendix B]{S1}.
\qed

\medskip

\subsection{Proof of Theorem \ref{local}}
Since $e^{- \psi(|x|)}/\norm[p(\cdot)]{e^{- \psi(|x|)}} \in \M$,
\begin{equation} \label{4.4}
\lambda_1 \le J\Big(\frac{e^{- \psi(|x|)}}{\norm[p(\cdot)]{e^{- \psi(|x|)}}}\Big) = \frac{J(e^{- \psi(|x|)})}{\norm[p(\cdot)]{e^{- \psi(|x|)}}^2}.
\end{equation}
By virtue of condition \eqref{plocal},
\begin{equation} \label{4.5}
\rho(e^{- \psi(|x|)}) = \int_{\R^N} e^{- \psi(|x|)\, p(x)}\, dx \ge e^a \int_{B_R(0)^c} e^{- p^\infty \psi(|x|)}\, dx.
\end{equation}
It follows from \eqref{4.4}, \eqref{modnorm} and \eqref{4.5} that \eqref{ineq} holds if $a > 0$ is sufficiently large. \qed

\subsection{Proof of Corollary \ref{symm}}
If $N\geq 3$, $V$ and $p$ are radially symmetric in $\R^N$,
we can get some symmetry properties of minimizers by applying the results of Mari\c{s} \cite{M}. We can equivalently define
\[
\mathcal{M}=\left\{u\in H^1(\R^N) : \ \int_{\R^N}|u(x)|^{p(x)}\frac{dx}{p(x)}=1\right\}.
\]
For any hyperplane $\Pi$ through $0$, splitting $\R^N$ in two half-spaces $\Pi^+$ and $\Pi^-$, and all $u\in H^1(\R^N)$ we define functions $u_{\Pi^+},u_{\Pi^-}:\R^N\to\R$ by setting
\[
u_{\Pi^+}(x) = \begin{cases}
u(x), & \mbox{if $x\in\Pi^+\cup\Pi$}\\[5pt]
u(2P_{\Pi}(x)-x), & \mbox{if $x\in\Pi^-$}
\end{cases},
\]
\[
u_{\Pi^-}(x) = \begin{cases}
u(x), & \mbox{if $x\in\Pi^-\cup\Pi$}\\[5pt]
u(2P_{\Pi}(x)-x), & \mbox{if $x\in\Pi^+$},
\end{cases}
\]
where $P_{(\cdot)}$ is the orthogonal projection from $\R^N$ to an affine submanifold $(\cdot)$. Clearly $u_{\Pi^\pm}\in H^1(\R^N)$, so hypothesis {\bf A1} of \cite{M} is satisfied.
Since  $u$ is of class $C^1(\R^N)$, so hypothesis {\bf A2} holds as well. By \cite[Theorem 1]{M} we learn that,
for every minimizer $w$ of $\tilde J$, there exists a line $L$ through $0$ such that $w(x)=\tilde w(P_L(x),|x-P_L(x)|)$
for all $x\in\R^N$, for a convenient function $\tilde w:L\times\R^+\to\R$.
\qed

\bigskip
\bigskip
\bigskip

\end{document}